\newtheorem{theorem}{Theorem}[section]
\newtheorem{lemma}[theorem]{Lemma}
\numberwithin{figure}{section}
\theoremstyle{definition}
\theoremstyle{remark}
\newtheorem{remark}[theorem]{Remark}
\numberwithin{equation}{section}
	\DeclareMathOperator{\loc}{loc}
\newcommand\R{\mathbb R}
\begin{document}

\title[Neumann $(p,q)$-eigenvalue problem]{On $(p,q)$-eigenvalues of the weighted $p$-Laplace operator in outward H\"older cuspidal domains}

\author{Prashanta Garain, Valerii Pchelintsev, Alexander Ukhlov}

\begin{abstract}
In this paper, we will study Neumann $(p,q)$-eigenvalue problem for the weighted $p$-Laplace operator in outward H\"older cuspidal domains. The suggested method is based on the composition operators on weighted Sobolev spaces.
\end{abstract}
\maketitle
\footnotetext{\textbf{Key words and phrases:} Elliptic equations, Sobolev spaces, quasiconformal mappings.}
\footnotetext{\textbf{2020
Mathematics Subject Classification:} 35P15, 46E35, 30C65.}

\section{Introduction}
In this paper, we consider the following weighted Neumann $(p,q)$-eigenvalue problem:
\begin{equation}\label{pr-m}
-\textrm{div}(|x|^{\alpha}|\nabla u|^{p-2}\nabla u)=\lambda \|u\|_{L_q(\Omega_{\gamma})}^{p-q}|u|^{q-2}u\text{ in }\Omega,\quad\frac{\partial u}{\partial\nu}=0\text{ on }\partial \Omega_{\gamma},
\end{equation}
in bounded outward cuspidal domains $\Omega_{\gamma}\subset\mathbb R^n$ with anisotropic H\"older $\gamma$-sin\-gu\-la\\-ri\-ti\-es {(introduced in \cite{GG94})}, where $\nu$ is an outward unit normal to the boundary of $\Omega_\gamma$. Here
{
\begin{equation}\label{domain}
\Omega_{\gamma}=\{ x=(x_1,x_2,\ldots,x_n)\in\mathbb R^n : 0<x_n<1, 0<x_i<g_i(x_n),\,i=1,2,\dots,n-1\},
\end{equation}}
where $g_i(t)=t^{\gamma_i}$, $\gamma_i\geq 1$, $0< t< 1$, are H\"older functions and we denote by $\gamma={\log (g_1(t)\cdot ... \cdot g_{n-1}(t))}{(\log t)}^{-1}+1$. {It is evident that $\gamma\geq n$.}
When $g_1=g_2=\dots=g_{n-1}$, we will say the domain $\Omega_{\gamma}$ is a domain with $\sigma$-H\"older singularity, $\sigma=(\gamma-1)/(n-1)$. {For $g_1(t)=g_2(t)=\ldots=g_{n-1}(t)=t$, we will use the notation $\Omega_n$ instead of $\Omega_{\gamma}$.}

The main results of this article include the case of Lipschitz domains $\Omega\subset\mathbb R^n$, in this case we can put $\gamma=n$.
Throughout the paper, we assume that $1<p<\alpha + \gamma$, where $-n<\alpha<n(p-1)$,\,$n\geq 2$, and $1<q<p^*$, where $p^*={\gamma p}/{(\alpha + \gamma -p)}$ unless otherwise mentioned.
The operator
$$
\textrm{div}(|x|^{\alpha} |\nabla u|^{p-2}\nabla u)
$$
is known as \textit{weighted $p$-Laplacian} (see, for example, \cite{FPR,HKM}). The weighted $p$-Laplacian is a non-linear operator when $p \neq 2$ and is linear when $p=2$.

We consider the weighted Neumann $(p,q)$-eigenvalue problem~\eqref{pr-m} in the weak formulation: a function $u$ solves this eigenvalue problem~\eqref{pr-m} iff $u \in W^{1}_p(\Omega_{\gamma}, |x|^{\alpha})$ (see definitions in Section 2) and
\begin{equation}\label{weak}
\int_{\Omega_{\gamma}} \langle |\nabla u|^{p-2}\nabla u, \nabla v\rangle |x|^{\alpha}~dx
= \lambda \|u\|_{L_q(\Omega_{\gamma})}^{p-q} \int_{\Omega_{\gamma}} |u|^{q-2}u v~dx
\end{equation}
for all $v \in W^{1}_p(\Omega_{\gamma},|x|^{\alpha})$.

The estimates of non-linear Neumann eigenvalues is a long-standing complicated problem \cite{PS51}. The classical result by Payne and Weinberger  \cite{PW} states that in convex domains $\Omega\subset\mathbb R^n$, $n\geq 2$
$$
\lambda_{2,2}(\Omega)\geq \frac{\pi^2}{d(\Omega)^2},
$$
where $d(\Omega)$ is a diameter of a convex domain $\Omega$.
Unfortunately in non-convex domains $\lambda_{2,2}(\Omega)$ can not be estimated in the terms of Euclidean diameters. It can be seen by considering a domain consisting of two identical squares connected by a thin corridor \cite{BCDL16}. In \cite{BCT9,BCT15} lower estimates involved the isoperimetric constant relative to $\Omega$ were obtained.

The approach to the spectral estimates in non-convex domains which is based on the composition operators on Sobolev spaces \cite{U93,VU04,VU05} and its applications to the Sobolev type weighted embedding theorems \cite{GG94,GU09} is given  in \cite{GU16,GU17}.

In the present work, we will prove the Min-Max Principle for the weighted Neumann $(p,q)$-eigenvalues and we obtain spectral estimates in bounded outward cuspidal domains with anisotropic H\"older $\gamma$-singularities. In addition, in the case $q=2$, we obtain regularity results for eigenfunctions corresponding to the weighted Neumann $(p,q)$-eigenvalue problem.

\section{Weighted Sobolev spaces and embedding theorems}

\subsection{Composition operators on weighted Sobolev spaces}

Let $E$ be a measurable subset of $\mathbb R^n$, $n \geq 2$, and $w: \mathbb R^n \to [0,+ \infty)$ be a locally integrable nonnegative function, i.e., a weight. The weighted Lebesgue space $L_p(E,w)$, $1\leq p<\infty$, is defined as a Banach space of locally integrable functions
$f: E \to \mathbb R$ endowed with the following norm:
\[
\|f\|_{L_{p}(E,w)}=\biggr(\int_{E}|f(x)|^{p} w(x)\, dx\biggr)^{\frac{1}{p}}.
\]
If $w=1$, we write $L_p(E,w)=L_p(E)$.

Let $\Omega$ be an open subset of $\mathbb R^n$, $n \geq 2$. We define the weighted Sobolev space $W^{1}_{p}(\Omega,w)$, $1\leq p<\infty$, as a normed space of locally integrable weakly differentiable functions
$f:\Omega\to\mathbb{R}$ endowed with the following norm:
\[
\|f\|_{W^1_p(\Omega,w)}=\|f\|_{L_{p}(\Omega)} + \|\nabla f\|_{L_{p}(\Omega,w)}.
\]
If $w=1$, we write $W^{1}_{p}(\Omega,w)=W^{1}_{p}(\Omega)$.
Note that in this definition the weight $w$ is only in the Sobolev part in accordance with the weighted Neumann $(p,q)$-eigenvalue problem (\ref{pr-m}).

The seminormed weighted Sobolev space $L^1_p(\Omega,w)$, $1\leq p<\infty$, is defined
as a space of locally integrable weakly differentiable functions
$f:\Omega\to\mathbb{R}$ endowed with the following seminorm:
\[
\|f\|_{L^1_p(\Omega,w)}=\|\nabla f\|_{L_{p}(\Omega,w)}.
\]
If $w=1$, we write $L^1_p(\Omega,w)=L^1_p(\Omega)$.

Remark that without additional restrictions on $w$, the space $W^1_p(\Omega,w)$ is not necessarily a Banach space (see, for example, \cite{Kufn}). However, if the weight $w : \mathbb R^n \to [0,+ \infty)$ satisfy the $A_p$-condition:
\[
\sup_{B \subset \mathbb R^n} \left(\frac{1}{|B|}\int_{B} w\right)
\left(\frac{1}{|B|}\int_{B} w^{1/(1-p)}\right)^{p-1}<+ \infty ,
\]
where $1<p<\infty$ and $|B|$ is the Lebesgue measure of the ball $B$, then $W^1_p(\Omega,w)$ is a Banach space and smooth functions of class $W^1_p(\Omega,w)$ are dense in $W^1_p(\Omega,w)$ \cite{GU09}. We remark that $w(x)=|x|^\alpha\in A_p$ iff $-n<\alpha<n(p-1)$, see \cite{HKM}.

The weighted Sobolev space $W^1_{p,\loc}(\Omega,w)$ is defined as follows: $f\in W^1_{p,\loc}(\Omega,w)$
iff $f\in W^1_p(U,w)$ for every open and bounded set $U\subset  \Omega$ such that
$\overline{U}  \subset \Omega$, where $\overline{U} $ is the closure of the set $U$.

In accordance with the non-linear potential theory, we consider the Sobolev spaces as Banach spaces of equivalence classes of functions up to a set of a weighted  $p$-capacity zero \cite{HKM,M}.

Let $\Omega$ and $\widetilde{\Omega}$ be domains in the Euclidean space $\mathbb R^n$.
Then a homeomorphism $\varphi:\Omega\to\widetilde{\Omega}$ induces a bounded composition
operator \cite{VU04,VU05}
\[
\varphi^{\ast}:L^1_p(\widetilde{\Omega},w)\to L^1_s(\Omega),\,\,\,1 \leq s\leq p<\infty,
\]
by the composition rule $\varphi^{\ast}(f)=f\circ\varphi$, if the composition $\varphi^{\ast}(f)\in L^1_s(\Omega)$
is defined quasi-everywhere in $\Omega$ and there exists a constant $K_{p,s}(\varphi;\Omega)<\infty$ such that
\[
\|\varphi^{\ast}(f)\|_{L^1_s(\Omega)}\leq K_{p,s}(\varphi;\Omega)\|f\|_{L^1_p(\widetilde{\Omega},w)}
\]
for any refined function $f\in L^1_p(\widetilde{\Omega},w)$.

Recall that if $\varphi : \Omega \to \widetilde{\Omega}$ is a Sobolev homeomorphism, then its formal Jacobi matrix $D \varphi(x)$ and its determinant (Jacobian) $J(x, \varphi)$ are well defined at almost all points $x \in \Omega$. The norm $|D \varphi(x)|$ of the
matrix $D \varphi(x)$ is the norm of the corresponding linear operator. The Sobolev homeomorphism $\varphi: \Omega \to \widetilde{\Omega}$ of class $W^1_{1,\loc}(\Omega)$ is called a mapping of finite weighted distortion \cite{GU09} if
\[
D\varphi=0\,\,\, \text{a.e. on the set}\,\,\, Z=\{x \in \Omega:|J(x,\varphi)|w(\varphi(x))=0\}.
\]
Recall that a mapping $\varphi:\Omega\to\widetilde{\Omega}$ possesses the Luzin $N$-property, if the image of a set of measure zero has measure zero.

Let $\Omega$ and $\widetilde{\Omega}$ be domains in the Euclidean space $\mathbb R^n$. Then
a homeomorphism $\varphi:\Omega\to \widetilde{\Omega}$ is called $w$-weighted $(p,s)$-quasiconformal, $1<s<p<\infty$,  if $\varphi$ belongs
to the Sobolev space $W^{1}_{1,\loc}(\Omega)$, has finite weighted distortion and
\begin{equation}\label{kps}
K_{p,s}(\varphi;\Omega)= \left(\int_{\Omega}\left(\frac{|D\varphi(x)|^p}{|J(x,\varphi)|w(\varphi(x))}\right)^{\frac{s}{p-s}}dx\right)^{\frac{p-s}{ps}}
<\infty.
\end{equation}

The following result gives an analytic characterization of composition operators on weighted seminormed Sobolev spaces \cite{UV08} (see also \cite{GU09,PU23}) and states that:

\begin{theorem}\label{CompTh}
Assume that $w\in A_p$ for some $1<p<\infty$. Let $\Omega$ and $\widetilde{\Omega}$ be domains in the Euclidean space $\mathbb R^n$. Then  a homeomorphism  $\varphi:\Omega\to{\widetilde{\Omega}}$ generates, by the composition rule $\varphi^{*}(f)=f \circ \varphi$, a bounded composition operator
\[
\varphi^{\ast}:L^{1}_{p}(\widetilde{\Omega},w)\to L^{1}_{s}(\Omega),\,\,\,1<s< p < \infty,
\]
if and only if $\varphi$ is a $w$-weighted $(p,s)$-quasiconformal mapping.
\end{theorem}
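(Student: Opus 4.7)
The plan is to prove the two implications separately, following the classical pattern for composition operator characterizations due to Vodopyanov--Ukhlov.

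For the sufficiency direction ($\Leftarrow$), assume $\varphi$ is $w$-weighted $(p,s)$-quasiconformal. First, reduce to smooth $f\in C^\infty(\widetilde{\Omega})\cap L^1_p(\widetilde{\Omega},w)$; the extension to all of $L^1_p(\widetilde{\Omega},w)$ is then justified by the density result for $A_p$-weights cited in the excerpt. For such $f$, the chain rule (valid since $\varphi\in W^1_{1,\loc}$) gives $|\nabla(f\circ\varphi)(x)|\leq |\nabla f(\varphi(x))|\cdot|D\varphi(x)|$ almost everywhere, and since $\varphi$ has finite weighted distortion, $D\varphi=0$ on $Z=\{|J(x,\varphi)|w(\varphi(x))=0\}$, so only $\Omega\setminus Z$ contributes. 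On $\Omega\setminus Z$, multiply and divide the integrand by $(|J(x,\varphi)|w(\varphi(x)))^{s/p}$ and apply H\"older's inequality with exponents $p/s$ and $p/(p-s)$:
\[
\int_\Omega |\nabla(f\circ\varphi)|^s\,dx \leq K_{p,s}(\varphi;\Omega)^s\left(\int_\Omega |\nabla f(\varphi(x))|^p|J(x,\varphi)|w(\varphi(x))\,dx\right)^{s/p}.
\]
The last integral is estimated by $\|f\|_{L^1_p(\widetilde{\Omega},w)}^p$ after a change of variables; the inequality direction follows because $\varphi$ is a homeomorphism of finite distortion and hence has the Luzin $N$-property on $\Omega\setminus Z$, allowing one to push the integral forward to $\widetilde{\Omega}$ with the Jacobian cancelling.

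For the necessity direction ($\Rightarrow$), assume that $\varphi^*:L^1_p(\widetilde{\Omega},w)\to L^1_s(\Omega)$ is bounded with norm $M$. The first step is to extract Sobolev regularity of $\varphi$: pulling back smooth truncations of the coordinate functions $y_i$ on relatively compact subdomains of $\widetilde{\Omega}$ shows that each component $\varphi_i$ lies in $W^1_{s,\loc}(\Omega)\subset W^1_{1,\loc}(\Omega)$, and in particular $D\varphi$ and $J(\cdot,\varphi)$ exist a.e. Next, by testing against sequences concentrated near small balls $B(y,r)\subset\widetilde{\Omega}$ (or, equivalently, by a capacity comparison $\cp_s(\varphi^{-1}(F_0),\varphi^{-1}(F_1);\Omega)^{1/s}\leq M\,\cp_p(F_0,F_1;\widetilde{\Omega},w)^{1/p}$ for arbitrary condensers), one obtains a pointwise estimate of the form $|D\varphi(x)|^p\leq M^p\,|J(x,\varphi)|\,w(\varphi(x))\cdot\mathcal{K}(x)$ with $\mathcal{K}\in L^{s/(p-s)}(\Omega)$. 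This simultaneously yields the finite weighted distortion property (since $|D\varphi|=0$ wherever the right-hand side vanishes) and the integral bound $K_{p,s}(\varphi;\Omega)\leq M$.

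The hard part is the necessity direction, specifically the sharp passage from operator boundedness to the integrability of the distortion quotient appearing in~\eqref{kps}. The obstacle is that one cannot merely insert a single test function; one needs a duality/optimisation argument in which the extremal function for the capacity of a small condenser is plugged in, together with a covering lemma to patch these local estimates into the global integral $K_{p,s}(\varphi;\Omega)$. This step uses the $A_p$-condition on $w$ (to guarantee that capacitary extremals exist and are admissible) and the Luzin $N^{-1}$-property of $\varphi$, and it is exactly the content of \cite{UV08,GU09,PU23}, which would be invoked to complete the argument.
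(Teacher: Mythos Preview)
The paper does not prove this theorem at all: it is stated as a known result and attributed to \cite{UV08} (see also \cite{GU09,PU23}). There is therefore no ``paper's own proof'' to compare against; what you have written is, in outline, the argument from those cited sources, and you correctly identify that the delicate part of necessity ultimately rests on them.

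Two small comments on your sketch. In the sufficiency direction you justify the change-of-variables step by saying that $\varphi$ ``has the Luzin $N$-property on $\Omega\setminus Z$''; finite (weighted) distortion by itself does not guarantee this. What you actually need here is only the inequality form of the area/change-of-variables formula, $\int_\Omega g(\varphi(x))|J(x,\varphi)|\,dx\le\int_{\widetilde\Omega}g(y)\,dy$, which holds for any homeomorphism in $W^1_{1,\loc}$ without any $N$-property assumption. In the necessity direction your outline (coordinate test functions for $W^1_{1,\loc}$ regularity, then localized test functions/condensers to extract the distortion bound, then a covering argument) is the standard route of \cite{UV08,GU09}; since you explicitly defer the sharp passage to those references, your write-up is effectively a citation with commentary, which matches exactly what the paper does.
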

Now we give examples of $|x|^\alpha$-weighted $(p,s)$-quasiconformal mappings of Lipschitz domains onto bounded outward cuspidal domains with anisotropic H\"older $\gamma$-singularities \cite{GU09}.

Recall the definition of $\Omega_\gamma$ from \eqref{domain}. Also, we recall the notation $\Omega_n$ instead of $\Omega_\gamma$, when $g_1(t)=g_2(t)=\ldots=g_{n-1}(t)=t$. Define the mapping $\varphi_a:\Omega_n\to \Omega_{\gamma}$, $a>0$, from the Lipschitz convex domain $\Omega_n$ onto the outward cuspidal domain $\Omega_{\gamma}$ by the formula
\begin{equation}
\label{phi_a}
\varphi_a(x)=\left(\frac{x_1}{x_n}g_1^a(x_n),\ldots,\frac{x_{n-1}}{x_n}g_{n-1}^a(x_n),x^a_n\right).
\end{equation}

The following theorem refines the corresponding assertion from \cite{GU09}.

\begin{theorem}\label{S-P-ineq}
Let $-n<\alpha <n(p-1)$, $1<p<\alpha + \gamma$. Then {for $\max\{0,(n-p)/(\alpha + \gamma -p)\}<a<p(n-1)/(\alpha + \gamma -p)$}, the mapping $\varphi_a:\Omega_n\to \Omega_{\gamma}$,
is a $|x|^{\alpha}$-weighted (p,s)-quasiconformal mapping, for any
$1<s<{np}/{(a(\alpha+\gamma-p)+p)}$, of the Lipschitz convex domain $\Omega_n$ onto the bounded outward cuspidal domain with anisotropic H\"older $\gamma$-singularity $\Omega_{\gamma}$, possesses the Luzin $N$-property, and
\begin{multline}
\label{pq_est}
\|\varphi_a^{\ast}\|\leq K_{p,s}(\varphi_a;\Omega_n)\\
\leq \frac{\sqrt{\sum_{i=1}^{n-1}(a\gamma_i-1)^2+n-1+a^2}}{\sqrt[p]{a c_a}}\cdot \left(\frac{p-s}{np-s(a(\alpha+\gamma-p)+p)}\right)^{\frac{p-s}{ps}},
\end{multline}
{ for some constant $c_a>0$.}
\end{theorem}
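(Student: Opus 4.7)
The plan is to apply Theorem~\ref{CompTh} to the explicit mapping $\varphi_a$ from~\eqref{phi_a}: after checking that $\varphi_a$ is a homeomorphism of finite weighted distortion, the claimed estimate will come from a direct evaluation of the integral defining $K_{p,s}(\varphi_a;\Omega_n)$ in~\eqref{kps}. Substituting $g_i(t)=t^{\gamma_i}$ rewrites
\[
\varphi_a(x)=\bigl(x_1 x_n^{a\gamma_1-1},\ldots,x_{n-1}x_n^{a\gamma_{n-1}-1},x_n^a\bigr),
\]
which is $C^{\infty}$ on $\Omega_n=\{0<x_i<x_n<1\}$ and explicitly invertible onto $\Omega_{\gamma}$. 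Being smooth, it lies in $W^1_{1,\loc}(\Omega_n)$ and has the Luzin $N$-property automatically. A direct computation of the Jacobi matrix (diagonal in the first $n-1$ rows, plus a nonvanishing last column) together with the identity $\sum_{i=1}^{n-1}\gamma_i=\gamma-1$ (from the definition of $\gamma$) yields
\[
J(x,\varphi_a)=a\,x_n^{a\gamma-n}>0\quad\text{on }\Omega_n,
\]
so the distortion set $Z$ is empty and $\varphi_a$ is a mapping of finite weighted distortion.

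Next I would bound each factor in the integrand of~\eqref{kps} separately. For $|D\varphi_a|$, the Frobenius estimate
\[
|D\varphi_a|^2\leq\sum_{i=1}^{n-1}x_n^{2(a\gamma_i-1)}+\sum_{i=1}^{n-1}x_i^2(a\gamma_i-1)^2 x_n^{2(a\gamma_i-2)}+a^2x_n^{2(a-1)},
\]
combined with $x_i\leq x_n$ and the monotonicity $x_n^{a\gamma_i-1}\leq x_n^{a-1}$ (valid since $0<x_n<1$ and $\gamma_i\geq 1$), collapses everything onto a single power:
\[
|D\varphi_a(x)|\leq\sqrt{\textstyle\sum_{i=1}^{n-1}(a\gamma_i-1)^2+n-1+a^2}\,\,x_n^{a-1}.
\]
For the weight $w(\varphi_a(x))=|\varphi_a(x)|^{\alpha}$, the last coordinate dominates: $\varphi_a^i(x)\leq x_n^{a\gamma_i}\leq x_n^a=\varphi_a^n(x)$, which gives $x_n^a\leq|\varphi_a(x)|\leq\sqrt n\,x_n^a$ and hence $|\varphi_a(x)|^{\alpha}\geq c_a\,x_n^{a\alpha}$ for some positive constant $c_a$ depending only on $n$ and the sign of $\alpha$.

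Plugging these three estimates into~\eqref{kps}, all $x_n$-powers collapse into a single exponent, and Fubini applied to the $(x_1,\ldots,x_{n-1})$-slice (of volume $x_n^{n-1}$) reduces the problem to the one-dimensional integral
\[
\int_0^1 x_n^{\,n-1-\frac{s}{p-s}(p-n+a(\alpha+\gamma-p))}\,dx_n,
\]
which converges precisely when $s<np/(a(\alpha+\gamma-p)+p)$, the stated upper limit; the lower bound $a>\max\{0,(n-p)/(\alpha+\gamma-p)\}$ makes this limit at most $p$, and the upper bound $a<p(n-1)/(\alpha+\gamma-p)$ keeps it strictly larger than $1$, so the prescribed range of $s$ is non-empty and compatible with the quasiconformality definition. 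Evaluating the integral in closed form produces the second factor in~\eqref{pq_est}, and Theorem~\ref{CompTh} then finishes the argument. The main obstacle I anticipate is the careful bookkeeping of $x_n$-exponents together with the case split on the sign of $\alpha$ in the weight estimate; the remainder is a straightforward Fubini computation.
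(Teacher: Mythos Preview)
Your proposal is correct and follows essentially the same route as the paper: compute $J(x,\varphi_a)=a\,x_n^{a\gamma-n}$, bound $|D\varphi_a|$ by $x_n^{a-1}$ times the displayed constant, estimate $|\varphi_a(x)|^{\alpha}\ge c_a x_n^{a\alpha}$ from $x_i\le x_n$, reduce the $K_{p,s}$ integral via Fubini to the same one-variable integral, and read off the convergence range for $s$ and the explicit constant. The only cosmetic difference is in the Luzin $N$-property: you invoke smoothness (hence local Lipschitz continuity) directly, whereas the paper checks $\varphi_a\in L^1_n(\Omega_n)$ and cites~\cite{VGR79}; both arguments are valid.
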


\begin{proof}
In \cite{GU09,GU17} it was proved that for the mapping $\varphi_a:\Omega_n\to \Omega_{\gamma}$, $a>0$, defined by the formula (\ref{phi_a}) belongs to the Sobolev space $W^1_{1,\loc}(\Omega)$, the Jacobian $J(x,\varphi_a)=a x_n^{a-n}G^a(x_n)=a x_n^{a \gamma-n}>0$ in $\Omega_n$, and so $\varphi_a$ is a mapping of finite distortion \cite{VGR79}. In addition
\[
|D\varphi_a(x)| \leq x_n^{a-1} \sqrt{\sum_{i=1}^{n-1}(a\gamma_i-1)^2+n-1+a^2}.
\]
This estimate, gives, in particular, that
\begin{multline*}
\int_{\Omega_n}|D\varphi_a(x)|^n~dx\leq \left(\sum_{i=1}^{n-1}(a\gamma_i-1)^2+n-1+a^2\right)^{\frac{n}{2}}\int_{\Omega_n}x_n^{n(a-1)}~dx
\\=
\left(\sum_{i=1}^{n-1}(a\gamma_i-1)^2+n-1+a^2\right)^{\frac{n}{2}}\int_{0}^1\int_{0}^{x_n}... \int_{0}^{x_n} x_n^{n(a-1)}~dx_1 ...dx_{n-1}dx_n
\\=
\left(\sum_{i=1}^{n-1}(a\gamma_i-1)^2+n-1+a^2\right)^{\frac{n}{2}}\int_{0}^1 x_n^{n(a-1)+n-1}~dx_n<\infty,
\end{multline*}
if 
\begin{equation}
\label{lp}
n(a-1)+n-1>-1.
\end{equation}
The inequality \eqref{lp} holds for any $a>0$ and so the homeomorphism $\varphi_a$ belongs to the Sobolev space $L^1_n(\Omega)$ and possesses the Luzin $N$-property \cite{VGR79}.

Since $x_i \leq x_n$, $i=1,2,\ldots,n-1$, for all $x \in \Omega_n$, there exist constants $c_a, C_a>0$ such that
\[
c_a x_n^{a \alpha} \leq |\varphi_a(x)|^{\alpha} \leq C_a x_n^{a \alpha}.
\]

Let  $1<p<\alpha + \gamma$ and $\max\{0,(n-p)/(\alpha + \gamma -p)\}<a<p(n-1)/(\alpha + \gamma -p)$. We prove that  the integral
\begin{equation*}
K_{p,s}(\varphi_a;\Omega_n)= \left(\int_{\Omega_n}\left(\frac{|D\varphi_a(x)|^p}{|J(x,\varphi_a)|{|\varphi_a(x)|^\alpha})}\right)^{\frac{s}{p-s}}dx\right)^{\frac{p-s}{ps}}
\end{equation*}
is finite for any $1<s<{np}/{(a(\alpha+\gamma-p)+p)}$.

By estimates of $|D\varphi_a(x)|$, $|\varphi_a(x)|^{\alpha}$ and the value of $J(x,\varphi_a)$ we obtain, that
\begin{multline}
\label{est_norm}
K_{p,s}(\varphi_a;\Omega_n)=
\left(\int_{\Omega_n}\left(\frac{|D\varphi_a(x)|^p}{|J(x,\varphi_a)||\varphi_a(x)|^{\alpha}}\right)^{\frac{s}{p-s}}dx
\right)^{\frac{p-s}{ps}} \\
\leq \frac{\sqrt{\sum_{i=1}^{n-1}(a\gamma_i-1)^2+n-1+a^2}}{\sqrt[p]{a c_a}}
\left(\int_{\Omega_n} x_n^{\frac{(p(a-1)-a(\alpha + \gamma) +n)s}{p-s}}dx
\right)^{\frac{p-s}{ps}} \\
= \frac{\sqrt{\sum_{i=1}^{n-1}(a\gamma_i-1)^2+n-1+a^2}}{\sqrt[p]{a c_a}}
\left(\int_{0}^1 x_n^{\frac{(p(a-1)-a(\alpha + \gamma) +n)s}{p-s}+n-1}dx_n
\right)^{\frac{p-s}{ps}}<\infty,
\end{multline}
for any number $s$ such that 
\begin{equation}\label{ol1}
\frac{(p(a-1)-a(\alpha + \gamma) +n)s}{p-s}+n-1>-1.
\end{equation}
Now the inequality $a>\max\{\frac{n-p}{\alpha+\gamma-p},0\}$ gives us
$
\frac{np}{a(\alpha+\gamma-p)+p}<p
$
and the inequality
$
a<\frac{p(n-1)}{\alpha+\gamma-p}
$
implies that
$
\frac{np}{a(\alpha+\gamma-p)+p}>1.
$
Therefore \eqref{ol1} is true, since $
1<s<\frac{np}{a(\alpha+\gamma-p)+p}.
$
Hence the integral (\ref{est_norm}) is finite for any number $p$ such that $1<p<\alpha + \gamma$ and for any number $s$ such that 
$$
1<s<\frac{np}{a(\alpha+\gamma-p)+p}.
$$
So, by the definition, $\varphi_a:\Omega_n\to \Omega_{\gamma}$, $\max\left\{0,\frac{n-p}{\alpha + \gamma -p}\right\}<a<\frac{p(n-1)}{\alpha+\gamma-p}$,
is a $|x|^{\alpha}$-weighted $(p,s)$-quasiconformal mapping for any $1<p<\alpha + \gamma$ and $1<s<{np}/{(a(\alpha+\gamma-p)+p)}$.

In addition, by Theorem~\ref{CompTh} and the inequality (\ref{est_norm}), we have
\begin{multline*}
\|\varphi_a^{\ast}\|\leq K_{p,s}(\varphi_a;\Omega_n)
 \\
\leq \frac{\sqrt{\sum_{i=1}^{n-1}(a\gamma_i-1)^2+n-1+a^2}}{\sqrt[p]{a c_a}}
\left(\int_{0}^1 x_n^{\frac{(p(a-1)-a(\alpha + \gamma) +n)s}{p-s}+n-1}dx_n
\right)^{\frac{p-s}{ps}} \\
= \frac{\sqrt{\sum_{i=1}^{n-1}(a\gamma_i-1)^2+n-1+a^2}}{\sqrt[p]{a c_a}}\cdot \left(\frac{p-s}{np-s(a(\alpha+\gamma-p)+p)}\right)^{\frac{p-s}{ps}}.
\end{multline*}
{This completes the proof.}
\end{proof}

\begin{remark}
In conditions of Theorem~\ref{S-P-ineq} if 
$$
\frac{(p(a-1)-a(\alpha + \gamma) +n)s}{p-s}+n-1>0,
$$
that equivalent
$$
s<\frac{(n-1)p}{a(\alpha+\gamma-p)+p+1},
$$
then
$$
\|\varphi_a^{\ast}\|\leq K_{p,s}(\varphi_a;\Omega_n)\leq
 \frac{\sqrt{\sum_{i=1}^{n-1}(a\gamma_i-1)^2+n-1+a^2}}{\sqrt[p]{a c_a}}.
$$

\end{remark}

\subsection{Weighted Sobolev embeddings}

In this section we consider embedding of weighted Sobolev spaces into classical Lebesgue spaces in bounded outward cuspidal domains with anisotropic H\"older $\gamma$-singularities. Let $\Omega \subset \mathbb R^n$ be a bounded domain. Then $\Omega$ is called an $(r,s)$-Poincar\'e-Sobolev domain, $1\leq r,s \leq \infty$, if there exists a constant $C<\infty$, such that for any function $f\in W^1_s(\Omega)$, the $(r,s)$-Poincar\'e-Sobolev inequality
\[
\inf\limits_{c \in \mathbb R}\|f-c\|_{L_r(\Omega)}\leq C\|\nabla f\|_{L_s(\Omega)}
\]
holds. We denote by $B_{r,s}(\Omega)$ the best constant in this inequality.

On the first stage we prove the theorem on composition operators in normed Sobolev spaces $W^1_p(\widetilde{\Omega},w)$ and $W^1_s(\Omega)$. Let us recall from \cite{VU04,VU05}, that a homeomorphism $\varphi: \Omega\to \widetilde{\Omega}$ generates an inverse bounded composition operator, by the rule $\left(\varphi^{-1}\right)^{\ast}(g)=g\circ\varphi^{-1}$, on Lebesgue spaces
$$
\left(\varphi^{-1}\right)^{\ast}: L_r(\Omega)\to L_q(\widetilde{\Omega}), \,\,1<q<r<\infty,
$$
iff $\varphi$ possesses the Luzin $N$-property and 
\[
\int_{\Omega} |J(x,\varphi)|^{\frac{r}{r-q}}~dx < \infty.
\]

\begin{theorem}\label{emb}
Let $\Omega \subset \mathbb R^n$ be an $(r,s)$-Poincar\'e-Sobolev domain for some $1< s \leq r < \infty$. Let $w\in A_p$ for some $1<p<\infty$. Suppose that there exists a $w$-weighted $(p,s)$-quasiconformal mapping $\varphi$ of $\Omega$ onto the bounded domain $\widetilde{\Omega}\subset\mathbb R^n$ for some $s$ such that $1<s<p$ and $\varphi$ possesses the Luzin $N$-property.
If
\[
\int_{\Omega} |J(x,\varphi)|^{\frac{r}{r-q}}~dx < \infty, \,\,\text{for some}\,\, p \leq q \leq r,
\]
then the composition operator
\[
\varphi^{\ast}:W^1_p(\widetilde{\Omega},w) \to W^1_s(\Omega),
\]
is bounded.
\end{theorem}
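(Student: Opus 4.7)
The plan is to split $\|f\circ\varphi\|_{W^1_s(\Omega)}$ into its gradient and Lebesgue parts and bound them separately. The gradient part
\[
\|\nabla(f\circ\varphi)\|_{L_s(\Omega)} \leq K_{p,s}(\varphi;\Omega)\,\|\nabla f\|_{L_p(\widetilde{\Omega},w)}
\]
is immediate from Theorem~\ref{CompTh}, since the hypotheses entail that $\varphi$ is a $w$-weighted $(p,s)$-quasiconformal mapping.

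For $\|f\circ\varphi\|_{L_s(\Omega)}$, I would first use the $(r,s)$-Poincar\'e-Sobolev assumption on $\Omega$, applied to $f\circ\varphi$, to produce a constant $c\in\mathbb R$ with
\[
\|f\circ\varphi - c\|_{L_r(\Omega)} \leq B_{r,s}(\Omega)\,\|\nabla(f\circ\varphi)\|_{L_s(\Omega)} \leq B_{r,s}(\Omega)\,K_{p,s}(\varphi;\Omega)\,\|\nabla f\|_{L_p(\widetilde{\Omega},w)}.
\]
Since $s\leq r$ and $\Omega$ is bounded, H\"older's inequality yields the companion bound $\|f\circ\varphi-c\|_{L_s(\Omega)} \leq |\Omega|^{1/s-1/r}\|f\circ\varphi-c\|_{L_r(\Omega)}$, so via
\[
\|f\circ\varphi\|_{L_s(\Omega)} \leq \|f\circ\varphi-c\|_{L_s(\Omega)} + |c|\,|\Omega|^{1/s}
\]
the problem reduces to controlling $|c|$ by $\|f\|_{W^1_p(\widetilde{\Omega},w)}$.

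The bound on $|c|$ is where the Jacobian integrability hypothesis enters, via the recalled inverse composition operator. By the Luzin $N$-property and the assumption $\int_\Omega |J(x,\varphi)|^{r/(r-q)}\,dx<\infty$, the operator $(\varphi^{-1})^{\ast}\colon L_r(\Omega)\to L_q(\widetilde{\Omega})$ is bounded, and applied to $f\circ\varphi - c$ it gives $(\varphi^{-1})^{\ast}(f\circ\varphi-c) = f-c$, hence
\[
\|f-c\|_{L_q(\widetilde{\Omega})} \leq \|(\varphi^{-1})^{\ast}\|\cdot\|f\circ\varphi-c\|_{L_r(\Omega)}.
\]
Since $p\leq q$ and $\widetilde{\Omega}$ is bounded, a second H\"older step gives $\|f-c\|_{L_p(\widetilde{\Omega})} \leq |\widetilde{\Omega}|^{1/p-1/q}\|f-c\|_{L_q(\widetilde{\Omega})}$, and the triangle inequality
\[
|c|\,|\widetilde{\Omega}|^{1/p} = \|c\|_{L_p(\widetilde{\Omega})} \leq \|f\|_{L_p(\widetilde{\Omega})} + \|f-c\|_{L_p(\widetilde{\Omega})}
\]
then produces the desired estimate $|c| \leq C\,\|f\|_{W^1_p(\widetilde{\Omega},w)}$. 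Combining with the previous steps closes the bound on $\|f\circ\varphi\|_{L_s(\Omega)}$ and completes the argument.

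The conceptual point that requires care is that a direct change-of-variables attack on $\|f\circ\varphi\|_{L_s(\Omega)}$ would produce an integral involving the \emph{negative} power $|J(x,\varphi)|^{-s/(p-s)}$, which the hypothesis does not control; the role of the Jacobian integrability is instead to furnish the inverse composition operator that transfers the Poincar\'e-Sobolev oscillation estimate from $\Omega$ back to $\widetilde{\Omega}$, where the boundedness of $\widetilde{\Omega}$ together with $p\leq q$ lets the triangle inequality absorb the additive constant $c$.
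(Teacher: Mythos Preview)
Your proposal is correct and follows essentially the same route as the paper's proof: both bound the gradient part via Theorem~\ref{CompTh}, invoke the $(r,s)$-Poincar\'e--Sobolev inequality on $\Omega$ for $g=f\circ\varphi$, use the inverse composition operator $(\varphi^{-1})^{\ast}\colon L_r(\Omega)\to L_q(\widetilde{\Omega})$ furnished by the Jacobian integrability hypothesis, and then control the additive constant $c$ by the triangle inequality in $L_p(\widetilde{\Omega})$ together with the H\"older embedding $L_q(\widetilde{\Omega})\hookrightarrow L_p(\widetilde{\Omega})$. The only difference is the order in which these ingredients are assembled; your arrangement (first isolating $|c|$ and then closing the $L_s$-bound) is in fact slightly cleaner than the paper's version.
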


\begin{proof}
Let a function $f$ belongs to the Sobolev space $W^1_p(\widetilde{\Omega},w)$. Since $\varphi: \Omega\to \widetilde{\Omega}$ is a $w$-weighted $(p,s)$-quasiconformal mapping, there exists a bounded composition operator 
\[
\varphi^{\ast}:L^1_p(\widetilde{\Omega},w) \to L^1_s(\Omega).
\]
Hence, the inequality
\begin{equation}\label{in1}
\|\varphi^{\ast}(f)\|_{L^1_s(\Omega)}\leq K_{p,s}(\varphi;\Omega) \|f\|_{L^1_p(\widetilde{\Omega},w)}
\end{equation}
holds for any function $f$ belongs to the Sobolev space $W^1_p(\widetilde{\Omega},w)$.

Let us denote the composition by the symbol $g=\varphi^{*}(f)$. Because
\[
\int_{\Omega} |J(x,\varphi)|^{\frac{r}{r-q}}~dx < \infty,
\]
and $\varphi$ possesses the Luzin $N$-property, the inverse composition operator $(\varphi^{-1})^*:L_r(\Omega)\to L_q(\widetilde{\Omega})$ is bounded \cite{VU04,VU05}, i.e., there exists a positive constant $A_{r,q}(\Omega)<\infty$, such that the inequality
\begin{equation}\label{in2}
\|(\varphi^{-1})^*g\|_{L_q(\widetilde{\Omega})} \leq A_{r,q}(\Omega) \|g\|_{L_r(\Omega)}
\end{equation}
holds for any function $g\in L_r(\Omega)$.

Since the domain $\Omega$ is an $(r,s)$-Poincar\'e-Sobolev domain
\begin{equation}\label{in21}
\inf\limits_{c \in \mathbb R}\|g-c\|_{L_r(\Omega)}\leq B_{r,s}(\Omega)\|g\|_{L^1_s(\Omega)},
\end{equation}
and by Theorem~\ref{CompTh} the composition operator
\[
\varphi^*:L^1_p(\widetilde{\Omega},w)\to L^1_s(\Omega)
\]
is bounded, we obtain the following inequalities: 
\begin{multline}
\label{in22}
\inf\limits_{c \in \mathbb R}\|f-c\|_{L_q(\widetilde{\Omega})}\\
\leq \left\|f-\frac{1}{|\Omega|}\int_{\Omega}|g|^{q-2}g~dx\right\|_{L_q(\widetilde{\Omega})}
\leq A_{r,q}(\Omega)\cdot\left\|g-\frac{1}{|\Omega|}\int_{\Omega}|g|^{q-2}g~dx\right\|_{L_r(\Omega)}\\ 
= A_{r,q}(\Omega)\cdot \inf\limits_{c \in \mathbb R}\|g-c\|_{L_r(\Omega)} 
\leq A_{r, q}(\Omega) B_{r,s}(\Omega)\|g\|_{L^1_s(\Omega)}\\
\leq A_{r, q}(\Omega) B_{r,s}(\Omega) K_{p,s}(\varphi;\Omega) \|f\|_{L^1_p(\widetilde{\Omega},w)},
\end{multline}
where $K_{p,s}(\varphi;\Omega)$ is defined in \eqref{kps}.

The H\"older inequality implies the following estimate for any $c\in\mathbb{R}$:
\begin{multline}
\label{in4}
|c|=|\widetilde{\Omega}|^{-\frac{1}{p}} \|c\|_{L_p(\widetilde{\Omega})}\\
\leq |\widetilde{\Omega}|^{-\frac{1}{p}} \left(\|f\|_{L_p(\widetilde{\Omega})}+\|f-c\|_{L_p(\widetilde{\Omega})}\right) \\
\leq |\widetilde{\Omega}|^{-\frac{1}{p}} \|f\|_{L_p(\widetilde{\Omega})}
+ |\widetilde{\Omega}|^{-\frac{1}{q}} \|f-c\|_{L_q(\widetilde{\Omega})}.
\end{multline}

Because $s \leq r$, we have
\begin{equation}\label{in5}
\begin{split}
\|g\|_{L_s(\Omega)} &\leq \|c\|_{L_s(\Omega)} + \|g-c\|_{L_s(\Omega)}
\leq |c||\Omega|^{\frac{1}{s}} +|\Omega|^{\frac{r-s}{rs}}\|g-c\|_{L_r(\Omega)} \\
&\leq \left(|\widetilde{\Omega}|^{-\frac{1}{p}} \|f\|_{L_p(\widetilde{\Omega})}
+ |\widetilde{\Omega}|^{-\frac{1}{q}} \|f-c\|_{L_q(\widetilde{\Omega})}\right)|\Omega|^{\frac{1}{s}}
+|\Omega|^{\frac{r-s}{rs}}\|g-c\|_{L_r(\Omega)},
\end{split}
\end{equation}
where we have applied \eqref{in4}.
Now in \eqref{in5}, we combine the previous inequalities \eqref{in1}, \eqref{in21} and \eqref{in22} and finally obtain
\begin{multline*}
\|g\|_{L_s(\Omega)} \leq |\Omega|^{\frac{1}{s}} |\widetilde{\Omega}|^{-\frac{1}{p}} \|f\|_{L_p(\widetilde{\Omega})} \\
+ A_{r, q}(\Omega) B_{r,s}(\Omega) K_{p,s}(\varphi;\Omega) |\Omega|^{\frac{1}{s}} |\widetilde{\Omega}|^{-\frac{1}{q}}
\|f\|_{L^1_p(\widetilde{\Omega},w)} \\
+ B_{r,s}(\Omega) K_{p,s}(\varphi;\Omega) |\Omega|^{\frac{r-s}{rs}} \|f\|_{L^1_p(\widetilde{\Omega},w)}.
\end{multline*}

Therefore the composition operator
\[
\varphi^*:W^1_p(\widetilde{\Omega},w)\to W^1_s(\Omega)
\]
is bounded, which proves the theorem.
\end{proof}
Now we prove the weighted Sobolev embedding theorem in bounded outward cuspidal domains with anisotropic H\"older $\gamma$-singularities.

\begin{theorem}\label{thmemb}
Let $\Omega_{\gamma}\subset\mathbb R^n$ be a domain with anisotropic H\"older $\gamma$-sin\-gu\-larities.
Suppose $\max\big\{-n,\frac{p(n-\gamma)}{n}\big\}< \alpha <n(p-1)$ and $1<p<\alpha + \gamma$. Then the embedding operator
$$
i_{\Omega_{\gamma}}:W^1_p(\Omega_{\gamma},|x|^{\alpha})\hookrightarrow L_{q}(\Omega_{\gamma})
$$
is compact for any $1<q<p^*$, where $p^*={\gamma p}/{(\alpha + \gamma -p)}$.
\end{theorem}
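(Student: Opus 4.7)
The plan is to reduce the compact embedding on $\Omega_\gamma$ to the classical Rellich--Kondrachov compact embedding on the Lipschitz convex model domain $\Omega_n$, via the $|x|^{\alpha}$-weighted $(p,s)$-quasiconformal mapping $\varphi_a:\Omega_n\to\Omega_\gamma$ supplied by Theorem~\ref{S-P-ineq}. Once the weighted Sobolev problem is pulled back by $\varphi_a^{*}$, compactness on $\Omega_n$ is classical; the only genuine work is transferring convergence back to $\Omega_\gamma$ through a change of variables.

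Concretely, I would first fix $a$ and $s$ in the admissible ranges of Theorem~\ref{S-P-ineq}, and verify the Jacobian integrability hypothesis of Theorem~\ref{emb}: because $J(y,\varphi_a)=a\,y_n^{a\gamma-n}$ and $\Omega_n$ is a simplex-like region, the finiteness of $\int_{\Omega_n}|J(y,\varphi_a)|^{r/(r-q)}\,dy$ reduces to an elementary one-dimensional integrability condition in $y_n\in(0,1)$. Theorem~\ref{emb} then furnishes a bounded composition operator $\varphi_a^{*}:W^1_p(\Omega_\gamma,|x|^\alpha)\to W^1_s(\Omega_n)$, and the standard Rellich--Kondrachov theorem on the Lipschitz domain $\Omega_n$ gives a compact embedding $W^1_s(\Omega_n)\hookrightarrow L_t(\Omega_n)$ for every $1\leq t<s^{*}=ns/(n-s)$.

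To push compactness back to $\Omega_\gamma$, I take a bounded sequence $\{f_k\}\subset W^1_p(\Omega_\gamma,|x|^\alpha)$; then $g_k=f_k\circ\varphi_a$ is bounded in $W^1_s(\Omega_n)$, so after passing to a subsequence $g_k\to g$ in $L_t(\Omega_n)$. Setting $f=g\circ\varphi_a^{-1}$, the Luzin $N$-property of $\varphi_a$ (Theorem~\ref{S-P-ineq}), the change-of-variables formula, and H\"older's inequality yield
$$
\int_{\Omega_\gamma}|f_k-f|^q\,dx=\int_{\Omega_n}|g_k-g|^q\,J(y,\varphi_a)\,dy\leq\|g_k-g\|_{L_t(\Omega_n)}^{q}\left(\int_{\Omega_n}J(y,\varphi_a)^{t/(t-q)}\,dy\right)^{(t-q)/t},
$$
and the last integral is finite under another explicit one-dimensional condition on $y_n$, so $f_k\to f$ in $L_q(\Omega_\gamma)$.

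The main obstacle is the parameter bookkeeping: one must choose $(a,s,t,q)$ so that \emph{simultaneously} the hypotheses of Theorems~\ref{S-P-ineq} and~\ref{emb} are respected, $t<s^{*}$, both one-dimensional Jacobian integrals converge, and $q$ can be made arbitrarily close to $p^{*}$. Letting $a\uparrow p(n-1)/(\alpha+\gamma-p)$ and $s\uparrow np/(a(\alpha+\gamma-p)+p)$, a direct computation of $s^{*}$ and the two exponents $(a\gamma-n)r/(r-q)+n$ and $(a\gamma-n)t/(t-q)+n$ shows that the attainable exponents fill out the whole interval $(1,p^{*})$ with $p^{*}=\gamma p/(\alpha+\gamma-p)$. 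The extra lower bound $\alpha>p(n-\gamma)/n$ is precisely what keeps this window of admissible $a$ nonempty in the cuspidal regime $\gamma>n$, and thereby guarantees that the embedding reaches the full weighted Sobolev exponent~$p^{*}$.
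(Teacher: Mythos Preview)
Your approach is essentially the same as the paper's: factor $i_{\Omega_\gamma}=(\varphi_a^{-1})^{*}\circ i_{\Omega_n}\circ\varphi_a^{*}$, where $\varphi_a^{*}:W^1_p(\Omega_\gamma,|x|^\alpha)\to W^1_s(\Omega_n)$ is the bounded composition from Theorem~\ref{emb}, $i_{\Omega_n}$ is the classical compact Sobolev embedding on the Lipschitz domain $\Omega_n$, and $(\varphi_a^{-1})^{*}$ is the bounded inverse composition on Lebesgue spaces coming from the Jacobian integrability; the paper writes this as an operator factorization while you write it out via sequences, but the content is identical. One small correction in your parameter sketch: the paper keeps $a$ \emph{fixed} in $\mathcal{I}=\big(\max\{0,\tfrac{n-p}{\alpha+\gamma-p}\},\min\{\tfrac{n}{\gamma},\tfrac{p(n-1)}{\alpha+\gamma-p}\}\big)$ and only pushes $s\uparrow t=\tfrac{np}{a(\alpha+\gamma-p)+p}$, observing that $\tfrac{a\gamma s}{n-s}\to p^{*}$ independently of $a$; your suggestion to send $a\uparrow \tfrac{p(n-1)}{\alpha+\gamma-p}$ is unnecessary and can leave the admissible interval when $\tfrac{n}{\gamma}$ is the smaller upper bound.
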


\begin{proof}
To prove the theorem we will apply Theorem~\ref{emb}.
First, we will find some suitable $r_0$ and $s_0$ to verify Theorem~\ref{emb}. To this end, we remark that $\frac{n-p}{\alpha+\gamma-p}<\frac{n}{\gamma}$, since $\alpha>\frac{p(n-\gamma)}{n}$. We define the interval
$$
\mathcal{I}:=\Big(\max\Big\{0,\frac{n-p}{\alpha+\gamma-p}\Big\},\min\Big\{\frac{n}{\gamma},\frac{p(n-1)}{\alpha+\gamma-p}\Big\}\Big).
$$
Let us fix $a\in\mathcal{I}$ and we set
$$
t=\frac{np}{a(\alpha+\gamma-p)+p}.
$$
Note that since $a\in\mathcal{I}$, we obtain $1<t<\min\{p,n\}$. Let us fix $q\in[p,p^*)$. We observe that as $s\to t$, where $s\in(1,t)$, we get $\frac{a\gamma s}{n-s}\to p^*$. Hence, taking into account that $p^*-q>0$, there exists $s_0\in(1,t)$ such that 
\begin{equation}\label{inenew}
\frac{a\gamma s_0}{n-s_0}-p^*>q-p^*.
\end{equation}
The above inequality \eqref{inenew} yields that 
\begin{equation}\label{inenew2}
\frac{nq}{a\gamma}<\frac{ns_0}{n-s_0}.
\end{equation}
Since $a<\frac{n}{\gamma}$, we get $\frac{nq}{a\gamma}>q$ and so
\eqref{inenew2} gives that
\begin{equation}\label{inenew3}
q<\frac{nq}{a\gamma}<\frac{ns_0}{n-s_0}.
\end{equation}
Therefore, we choose $r_0\in\big(\frac{nq}{a\gamma},\frac{ns_0}{n-s_0}\big)$ such that $s_0<r_0$. Hence $q<\frac{a\gamma}{n}r_0<r_0$.

Now, we verify Theorem~\ref{emb} in the following four steps: First, We note that $|x|^\alpha\in A_p$, since $-n<\alpha<n(p-1)$. Now, remark that $\Omega_n$ is a Lipschitz domain and so is an $(r_0,s_0)$-Poincar\'e-Sobolev domain. Next, by Theorem~\ref{S-P-ineq}, the mapping $\varphi_a:\Omega_n\to \Omega_{\gamma}$, $a\in\mathcal{I}$ defined by the formula~(\ref{phi_a}) is a $|x|^\alpha$-weighted $(p,s_0)$-quasiconformal mapping for $1<p<\alpha + \gamma$, $1<s_0<t$. In addition, by Theorem~\ref{S-P-ineq} the mapping $\varphi_a$ possesses the Luzin $N$-property.
We observe that
\begin{multline*}
J_a=\int_{\Omega_n}|J(x,\phi_a)|^\frac{r_0}{r_0-q}\,dx\\
=a^{\frac{r_0}{r_0-q}} \int_{0}^1
\int_{0}^{x_n} \ldots \int_{0}^{x_n} x_n^{\frac{(a\gamma -n)r_0}{r_0-q}}dx_1 \ldots dx_n
=a^{\frac{r_0}{r_0-q}} \int_{0}^1 x_n^{\frac{(a\gamma -n)r_0}{r_0-q}+n-1}dx_n.
\end{multline*}
Since $q<\frac{a\gamma}{n}r_0$, we obtain
$$
\frac{(a\gamma -n)r_0}{r_0-q}+n-1>-1.
$$
Hence we get $J_a<\infty$. Therefore, applying Theorem~\ref{emb}, we deduce that the embedding operator
$$
\varphi^{\ast}:W^1_p(\Omega_{\gamma},|x|^{\alpha})\hookrightarrow W^{1}_{s_0}(\Omega_n)
$$
is bounded. Now the embedding operator
$$
i_{\Omega_{\gamma}}:W^1_p(\Omega_{\gamma},|x|^{\alpha})\hookrightarrow L_{q}(\Omega_{\gamma})
$$
is compact for our chosen $q\in[p,p^*)$, since $i_{\Omega_{\gamma}}=(\varphi^{-1})^{\ast}\circ  i_{\Omega_n}\circ \varphi^{\ast}$, where $i_{\Omega_n}:W_{s_0}^1(\Omega_n)\hookrightarrow L_{r_0}(\Omega_n)$ is the compact embedding operator (since $r_0<\frac{ns_0}{n-s_0}$) and $(\varphi^{-1})^{\ast}:L_{r_0}(\Omega_n)\hookrightarrow L_{q}(\Omega_{\gamma})$ is the bounded inverse composition operator on Lebesgue spaces. Since, $q$ is arbitrary, the embedding $i_{\Omega_\gamma}$ is compact for any $q\in[p,p^*)$. Therefore, taking into account the continuous embedding $L_\beta(\Omega_\gamma)\hookrightarrow L_\alpha(\Omega_\gamma)$ for $1<\alpha<\beta$ the proof follows for any $q\in(1,p^*)$. This completes the proof.
\end{proof}

\section{Neumann eigenvalue problem}

\subsection{Min-Max Principle} We assume that $\max\big\{-n,\frac{p(n-\gamma)}{n}\big\}< \alpha <n(p-1)$, $1<p<\alpha + \gamma$ and $1<q<p^*$.
Now we prove the Min-Max Principle for the first non-trivial Neumann $(p,q)$-eigenvalue~\eqref{weak}.
To this end, we follow the proof of \cite[Lemma 2]{CHP} and first obtain the following auxiliary result.

\begin{lemma}\label{lem1}
Let $v\in W^{1}_p(\Omega_{\gamma},|x|^{\alpha})\setminus\{0\}$ be such that $\int_{\Omega_{\gamma}}|v|^{q-2}v\,dx=0$. Then there exists a constant $C=C(\Omega_{\gamma})>0$ such that
$$
\|v\|_{L_p(\Omega_{\gamma})}\leq C\|\nabla v\|_{L_p(\Omega_{\gamma},|x|^{\alpha})}.
$$
\end{lemma}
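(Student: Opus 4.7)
The plan is a standard contradiction-and-compactness argument built on Theorem~\ref{thmemb}. If the inequality failed for every constant, there would exist a sequence $\{v_k\}\subset W^1_p(\Omega_\gamma,|x|^\alpha)\setminus\{0\}$ with $\int_{\Omega_\gamma}|v_k|^{q-2}v_k\,dx=0$ and $\|v_k\|_{L_p(\Omega_\gamma)} > k\,\|\nabla v_k\|_{L_p(\Omega_\gamma,|x|^\alpha)}$. After rescaling by $\|v_k\|_{L_p(\Omega_\gamma)}$, I may assume $\|v_k\|_{L_p(\Omega_\gamma)}=1$ and $\|\nabla v_k\|_{L_p(\Omega_\gamma,|x|^\alpha)}\to 0$; the sequence is then bounded in the reflexive weighted Sobolev space $W^1_p(\Omega_\gamma,|x|^\alpha)$.

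Next I would invoke Theorem~\ref{thmemb} to extract a subsequence (still denoted $v_k$) which converges strongly in $L_{\tilde q}(\Omega_\gamma)$ for every $\tilde q\in(1,p^*)$ and weakly in $W^1_p(\Omega_\gamma,|x|^\alpha)$ to some limit $v$. Since $\nabla v_k\to 0$ strongly in $L_p(\Omega_\gamma,|x|^\alpha)$, the weak limit of the gradients vanishes, and by connectedness of $\Omega_\gamma$ the limit $v\equiv c$ is constant. Exploiting the continuity of $u\mapsto|u|^{q-2}u$ from $L_q(\Omega_\gamma)$ into $L_{q/(q-1)}(\Omega_\gamma)$ (which uses $q>1$), the zero-constraint passes to the limit to give $|c|^{q-2}c\cdot|\Omega_\gamma|=0$, hence $c=0$. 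I would then pick $\tilde q\in[p,p^*)$ and apply H\"older on the bounded domain to conclude $\|v_k\|_{L_p(\Omega_\gamma)}\leq|\Omega_\gamma|^{1/p-1/\tilde q}\|v_k\|_{L_{\tilde q}(\Omega_\gamma)}\to 0$, contradicting $\|v_k\|_{L_p(\Omega_\gamma)}=1$.

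The delicate point I expect to be the main obstacle is securing an admissible exponent $\tilde q\geq p$ inside $(1,p^*)$: this promotion is equivalent to $p<p^*$, i.e.\ to $\alpha<p$. Verifying this comparison from the ambient hypotheses on $\alpha$, $p$, $\gamma$ — or, should it fail in full generality, bypassing the last step via a Vitali-type equi-integrability upgrade of the strong $L_{\tilde q}$ convergence (for $\tilde q<p$) to strong $L_p$ convergence using the $W^1_p(\Omega_\gamma,|x|^\alpha)$ bound — is the technical heart of the argument; everything else is the routine contradiction-compactness template of \cite[Lemma~2]{CHP}.
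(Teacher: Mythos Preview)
Your argument is essentially the paper's own proof: contradict, normalize in $L_p$, obtain a bounded sequence in $W^1_p(\Omega_\gamma,|x|^\alpha)$, extract via the compact embedding of Theorem~\ref{thmemb}, deduce that the limit is a constant, use the constraint to force that constant to $0$, and then contradict the normalization. The paper closes with the single line ``This contradicts the hypothesis that $\|v_n\|_{L_p(\Omega_\gamma)}=1$,'' which---precisely as you anticipate---tacitly uses strong $L_p$ convergence of the subsequence, hence $p<p^*$.

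Your flagged ``delicate point'' is therefore not only real but is left unaddressed in the paper's proof as well. The condition $p<p^*$ is equivalent to $\alpha<p$, and this is \emph{not} forced by the standing hypotheses of Section~3: for instance $n=2$, $\gamma=10$, $p=3$, $\alpha=3.5$ satisfies $\max\{-n,p(n-\gamma)/n\}<\alpha<n(p-1)$ and $1<p<\alpha+\gamma$, yet $p^*=30/10.5<3=p$. Your proposed Vitali bypass does not obviously rescue the argument either: when $p\ge p^*$ all one has is $L_p$-boundedness together with a.e.\ convergence to $0$, which does not yield uniform integrability of $|v_k|^p$ (concentration at the cusp, where the weight $|x|^\alpha$ is small, is not ruled out by $\|\nabla v_k\|_{L_p(\Omega_\gamma,|x|^\alpha)}\to 0$). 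In the regime $\alpha<p$ both your write-up and the paper's are complete; outside it the lemma as stated seems to need an additional ingredient that neither supplies.
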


\begin{proof}
By contradiction, suppose for every $n\in\mathbb{N}$, there exists $v_n\in W^{1}_p(\Omega_{\gamma},|x|^{\alpha})\setminus\{0\}$ such that $\int_{\Omega_{\gamma}}|v_n|^{q-2}v_n\,dx=0$ and
\begin{equation}\label{cn}
\|v_n\|_{L_p(\Omega_{\gamma})}>n\|\nabla v_n\|_{L_p(\Omega_{\gamma},|x|^{\alpha})}.
\end{equation}
Without loss of generality, let us assume that $\|v_n\|_{L_p(\Omega_{\gamma})}=1$. If not, we define
$$
u_n=\frac{v_n}{\|v_n\|_{L_p(\Omega_{\gamma})}},
$$
then $\|u_n\|_{L_p(\Omega_{\gamma})}=1$ and \eqref{cn} holds for $u_n$ and also $\int_{\Omega_{\gamma}}|u_n|^{q-2}u_n\,dx=0$. By \eqref{cn}, since $\|\nabla v_n\|_{L_p(\Omega_{\gamma},|x|^{\alpha})}\to 0$ as $n\to\infty$, we have $\{v_n\}$ is uniformly bounded in $W^{1}_p(\Omega_{\gamma},|x|^{\alpha})$.

Hence by Theorem \ref{thmemb}, there exists $v\in W^{1}_p(\Omega_{\gamma},|x|^{\alpha})$ such that
$$
v_n{\rightharpoonup} v\text{ weakly\,in }W^{1}_p(\Omega_{\gamma},|x|^{\alpha}),\quad v_n\to v\text{ strongly\,in }L_q(\Omega_{\gamma}),\quad\forall \,1<q<p^{*}
$$
and
$$
\nabla v_n{\rightharpoonup}\nabla v\text{ weakly\,in } L_p(\Omega_{\gamma},|x|^{\alpha}).
$$
Hence, by \cite[Theorem~4.9]{B10} there exists a function $g\in L_q(\Omega_{\gamma})$ such that
$$
|v_n|\leq g\text{ a.e.\, in }\Omega_{\gamma}.
$$

Since $\|\nabla v_n\|_{L_p(\Omega_{\gamma},|x|^{\alpha})}\to 0$ as $n\to\infty$, we have $\nabla v_n{\rightharpoonup} 0$ weakly in $L_p(\Omega_{\gamma},|x|^{\alpha})$, hence $\nabla v=0$ a.e. in $\Omega_{\gamma}$, which gives that $v=constant$ a.e. in $\Omega_{\gamma}$. This combined with the fact that
$$
0=\lim_{n\to\infty}\int_{\Omega_{\gamma}}|v_n|^{q-2}v_n\,dx=\int_{\Omega_{\gamma}}|v|^{q-2}v\,dx
$$
gives that $v=0$ a.e. in $\Omega_{\gamma}$. This contradicts the hypothesis that $\|v_n\|_{L_p(\Omega_{\gamma})}=1$. This completes the proof.
\end{proof}

\begin{theorem}\label{minthm}
There exists $u\in W^1_p(\Omega_{\gamma},|x|^{\alpha})\setminus\{0\}$ such that $\int_{\Omega_{\gamma}}|u|^{q-2}u\,dx=0$. Moreover,
\begin{multline*}
\lambda_{p,q}(\Omega_{\gamma},|x|^{\alpha})= \\
\inf \left\{\frac{\|\nabla u\|^p_{L_p(\Omega_{\gamma},|x|^{\alpha})}}{\|u\|^p_{L_q(\Omega_{\gamma})}} : u \in W^1_p(\Omega_{\gamma},|x|^{\alpha}) \setminus \{0\},
\int_{\Omega_{\gamma}} |u|^{q-2}u~dx=0 \right\}\\
=\frac{\|\nabla u\|^p_{L_p(\Omega_{\gamma},|x|^{\alpha})}}{\|u\|^p_{L_q(\Omega_{\gamma})}}.
\end{multline*}
Further, $\lambda_{p,q}(\Omega_{\gamma},|x|^{\alpha})>0.$
\end{theorem}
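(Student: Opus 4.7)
The plan is to apply the direct method of the calculus of variations to the Rayleigh quotient $R(u) = \|\nabla u\|_{L_p(\Omega_\gamma,|x|^{\alpha})}^p / \|u\|_{L_q(\Omega_\gamma)}^p$ on the admissible set
$\mathcal{A} = \{u \in W^1_p(\Omega_\gamma,|x|^{\alpha}) \setminus \{0\} : \int_{\Omega_\gamma} |u|^{q-2}u\,dx = 0\}$. First I would check that $\mathcal{A}$ is nonempty: pick any nonconstant $\varphi \in C_c^1(\Omega_\gamma)$, then the map $t \mapsto \int_{\Omega_\gamma}|\varphi+t|^{q-2}(\varphi+t)\,dx$ is continuous with limits $\pm\infty$ as $t \to \pm\infty$, so by the intermediate value theorem it vanishes at some $t_0$, producing a nonzero element $\varphi + t_0 \in \mathcal{A}$. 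Since $R$ is $0$-homogeneous, we may take a minimizing sequence $\{u_n\}\subset\mathcal{A}$ normalized by $\|u_n\|_{L_q(\Omega_\gamma)} = 1$.

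Next I would establish boundedness in $W^1_p(\Omega_\gamma,|x|^{\alpha})$. From the minimizing property, $\|\nabla u_n\|_{L_p(\Omega_\gamma,|x|^{\alpha})}$ is bounded, and applying Lemma~\ref{lem1} to each admissible $u_n$ yields $\|u_n\|_{L_p(\Omega_\gamma)} \leq C \|\nabla u_n\|_{L_p(\Omega_\gamma,|x|^{\alpha})}$, so $\{u_n\}$ is bounded in $W^1_p(\Omega_\gamma,|x|^{\alpha})$. Since $|x|^{\alpha} \in A_p$, this is a reflexive Banach space, so up to a subsequence $u_n \rightharpoonup u$ weakly in $W^1_p(\Omega_\gamma,|x|^{\alpha})$ (in particular $\nabla u_n \rightharpoonup \nabla u$ weakly in $L_p(\Omega_\gamma,|x|^{\alpha})$). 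The compact embedding provided by Theorem~\ref{thmemb} upgrades this to $u_n \to u$ strongly in $L_q(\Omega_\gamma)$, whence $\|u\|_{L_q(\Omega_\gamma)} = 1$ (so $u \neq 0$), and the continuity of the Nemytski map $v \mapsto |v|^{q-2}v$ from $L_q(\Omega_\gamma)$ into $L_{q/(q-1)}(\Omega_\gamma)$ lets me pass to the limit in the constraint to get $\int_{\Omega_\gamma}|u|^{q-2}u\,dx = 0$. Thus $u \in \mathcal{A}$, and weak lower semicontinuity of the $L_p(\Omega_\gamma,|x|^{\alpha})$-norm on gradients yields $R(u) \leq \liminf_n R(u_n) = \lambda_{p,q}(\Omega_\gamma,|x|^{\alpha})$, while admissibility of $u$ gives the reverse inequality; so the infimum is attained.

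Positivity of $\lambda_{p,q}(\Omega_\gamma,|x|^{\alpha})$ is immediate by chaining the continuous embedding of Theorem~\ref{thmemb} with Lemma~\ref{lem1}: for every $u \in \mathcal{A}$,
$\|u\|_{L_q(\Omega_\gamma)} \leq C_1\bigl(\|u\|_{L_p(\Omega_\gamma)} + \|\nabla u\|_{L_p(\Omega_\gamma,|x|^{\alpha})}\bigr) \leq C_2 \|\nabla u\|_{L_p(\Omega_\gamma,|x|^{\alpha})}$,
giving $\lambda_{p,q}(\Omega_\gamma,|x|^{\alpha}) \geq C_2^{-p} > 0$. The only subtle step is passing to the limit in the nonlinear constraint $\int|u_n|^{q-2}u_n\,dx = 0$ when $q\neq 2$; this is precisely where one needs the \emph{compact} (not merely continuous) embedding of Theorem~\ref{thmemb}, which converts weak $W^1_p$-convergence into strong $L_q$-convergence, after which continuity of $v\mapsto|v|^{q-2}v$ into the dual Lebesgue space handles the rest. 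Everything else is standard direct-method bookkeeping.
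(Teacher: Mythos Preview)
Your proof is correct and follows essentially the same direct-method approach as the paper: take a minimizing sequence, use Lemma~\ref{lem1} for boundedness in $W^1_p(\Omega_\gamma,|x|^\alpha)$, invoke the compact embedding of Theorem~\ref{thmemb} to pass to the limit in the $L_q$-norm and in the nonlinear constraint, and conclude via weak lower semicontinuity of the weighted gradient norm. The only cosmetic differences are that the paper normalizes by $\|\nabla u_n\|_{L_p(\Omega_\gamma,|x|^\alpha)}=1$ rather than $\|u_n\|_{L_q}=1$, and deduces positivity of $\lambda_{p,q}$ from the limiting inequality rather than from your a~priori bound; your explicit check that the admissible set is nonempty is a useful addition the paper leaves implicit.
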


\begin{proof}
Let $n\in\mathbb{N}$ and define the functionals $G: W^1_p(\Omega_{\gamma},|x|^{\alpha})\to\mathbb{R}$ by
$$
G(v)=\int_{\Omega_{\gamma}}|v|^{q-2}v\,dx,
$$
and $H_\frac{1}{n}: W^1_p(\Omega_{\gamma},|x|^{\alpha})\to\mathbb{R}$ by
$$
H_\frac{1}{n}(v)=\|\nabla v\|_{L_p(\Omega_{\gamma},|x|^{\alpha})}^p-(\lambda_{p,q}+\frac{1}{n})\|v\|_{L_q(\Omega_{\gamma})}^p,
$$
where we denoted $\lambda_{p,q}(\Omega_{\gamma},|x|^{\alpha})$ by $\lambda_{p,q}$.
By the definition of infimum, for every $n\in\mathbb{N}$, there exists a function $u_n\in W^1_p(\Omega_{\gamma},|x|^{\alpha})\setminus\{0\}$ such that
$$
\int_{\Omega_{\gamma}}|u_n|^{q-2}u_n\,dx=0\,\,\text{ and}\,\, H_\frac{1}{n}(u_n)<0.
$$
Without loss of generality, let us assume that $\|\nabla u_n\|_{L_p(\Omega_{\gamma},|x|^{\alpha})}=1$. By Lemma \ref{lem1}, the sequence $\{u_n\}$ is uniformly bounded in $W^1_p(\Omega_{\gamma},|x|^{\alpha})$.
Hence, by Theorem \ref{thmemb}, because the embedding operator
$$
W^1_p(\Omega_{\gamma},|x|^{\alpha})\hookrightarrow L_q(\Omega_{\gamma}),\quad 1<q<p^{*},
$$
is compact, there exists $u\in W^1_p(\Omega_{\gamma},|x|^{\alpha})$ such that $u_n{\rightharpoonup} u$ weakly in $W^1_p(\Omega_{\gamma},|x|^{\alpha})$, $u_n\to u$ strongly in $L_q(\Omega_{\gamma})$ and $\nabla u_n{\rightharpoonup} \nabla u$ weakly in $L_p(\Omega_{\gamma},|x|^{\alpha})$.

Hence, by \cite[Theorem~4.9]{B10} there exists a function $g\in L_q(\Omega_{\gamma})$ such that
$$
|u_n|\leq g\,\,\text{ a.e. in}\,\, \Omega_{\gamma}.
$$

Since $|u_n|\leq g$ a.e. in $\Omega_{\gamma}$ and $u_n\to u$ a.e. in $\Omega_{\gamma}$, then
$$
||u_n|^{q-2}u_n|\leq |u_n|^{q-1}\leq |g|^{q-1}\in L_{q'}(\Omega_{\gamma}).
$$
So, by the Lebesgue Dominated Convergence Theorem (see, for example, \cite{Fe69}), it follows that
$$
0=\lim_{n\to\infty}\int_{\Omega_{\gamma}}|u_n|^{q-2}u_n\,dx=\int_{\Omega_{\gamma}}|u|^{q-2}u\,dx.
$$
So,
$$
\int_{\Omega_{\gamma}}|u|^{q-2}u\,dx=0.
$$
Due to $H_\frac{1}{n}(u_n)<0$, we obtain
\begin{equation}\label{1}
\|\nabla u_n\|_{L_p(\Omega_{\gamma},|x|^{\alpha})}^p-(\lambda_{p,q}+\frac{1}{n})\|u_n\|_{L_q(\Omega_{\gamma})}^p<0.
\end{equation}
Since $\nabla u_n{\rightharpoonup} \nabla u$ weakly in $L_p(\Omega_{\gamma},|x|^{\alpha})$, by the weak lower semicontinuity of norm, we have
$$
\|\nabla u\|^p_{L_p(\Omega_{\gamma},|x|^{\alpha})}\leq \lim\inf_{n\to\infty}\|\nabla u_n\|^p_{L_p(\Omega_{\gamma},|x|^{\alpha})}=1.
$$
So, by passing to the limit in \eqref{1}, we get
$$
\lambda_{p,q}\geq \frac{\|\nabla u\|^p_{L_p(\Omega_{\gamma},|x|^{\alpha})}}{\|u\|^p_{L_q(\Omega_{\gamma})}}.
$$
Therefore, by the definition of $\lambda_{p,q}$, we obtain
$$
\lambda_{p,q} = \frac{\|\nabla u\|^p_{L_p(\Omega_{\gamma},|x|^{\alpha})}}{\|u\|^p_{L_q(\Omega_{\gamma})}}.
$$
Now, since $H_{\frac{1}{n}}(u_n)<0$ and $\|\nabla u_n\|^p_{L_p(\Omega_{\gamma},|x|^{\alpha})}=1$, we have
$$
1-(\lambda_{p,q}+\frac{1}{n})\|u_n\|_{L_q(\Omega_{\gamma})}^p<0.
$$
Letting $n\to\infty$, we get
$$
\|u\|^p_{L_q(\Omega_{\gamma})}\lambda_{p,q}\geq 1,
$$
which gives $\lambda_{p,q}>0$ and $u\neq 0$ a.e. in $\Omega$.
\end{proof}

\section{Poincar\'e-Sobolev inequalities for functions of $W^1_p(\Omega,w)$}

The $w$-weighted $(p,s)$-quasiconformal mappings allow us to "transfer" the Poincar\'e-Sobolev inequalities from
one domain to another.

\begin{theorem}\label{SPineq}
Suppose $w\in A_p$ for some $1<p<\infty$. Let a bounded domain $\Omega \subset \mathbb R^n$ be a $(r,s)$-Poincar\'e-Sobolev domain, for some $1<s \leq r<\infty$, $1<s<p$, and there exists a $w$-weighted $(p,s)$-quasiconformal homeomorphism $\varphi:\Omega\to\widetilde{\Omega}$ of a domain $\Omega$ onto a bounded domain $\widetilde{\Omega}$, which possesses the Luzin $N$-property such that
\[
M_{r,q}(\Omega)=\biggr(\int_{\Omega}\left|J(x,\varphi)\right|^{\frac{r}{r-q}}\, dx\biggr)^{\frac{r-q}{rq}}< \infty
\]
for some $1\leq q<r$. Then in the domain $\widetilde{\Omega}$ the weighted $(q,p)$-Poincar\'e-Sobolev inequality
\[
\inf\limits_{c \in \mathbb R}\biggr(\int_{\widetilde{\Omega}}|f(y)-c|^q dy\biggr)^{\frac{1}{q}} \leq B_{q,p}(\widetilde{\Omega},w)
\biggr(\int_{\widetilde{\Omega}}|\nabla f(y)|^p w(y) dy\biggr)^{\frac{1}{p}},\,\,\, f\in W^{1,p}(\widetilde{\Omega},w),
\]
holds with the best constant $B_{q,p}(\widetilde{\Omega},w)$, such that
\[
B_{q,p}(\widetilde{\Omega},w)\leq K_{p,s}(\varphi;\Omega)M_{r,q}(\Omega)B_{r,s}(\Omega).
\]
Here $B_{r,s}(\Omega)$ is the best constant in the $(r,s)$-Poincar\'e-Sobolev inequality in the
domain $\Omega$ and $K_{p,s}(\varphi;\Omega)$ is as defined in \eqref{kps}.
\end{theorem}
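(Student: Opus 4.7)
The plan is to follow the same three-tier scheme used in the proof of Theorem~\ref{emb}: convert $f$ on $\widetilde{\Omega}$ to $g=f\circ\varphi$ on $\Omega$ via $\varphi^{\ast}$, exploit the Poincar\'e-Sobolev inequality that is assumed to hold on $\Omega$, then transfer the resulting $L_r$ estimate back to an $L_q$ estimate on $\widetilde{\Omega}$ through the inverse composition operator. Each arrow in the diagram
\[
W^{1}_{p}(\widetilde{\Omega},w)\xrightarrow{\varphi^{\ast}} L^{1}_{s}(\Omega)\xrightarrow{(r,s)\text{-PS}} L_{r}(\Omega)\xrightarrow{(\varphi^{-1})^{\ast}} L_{q}(\widetilde{\Omega})
\]
contributes one of the three factors $K_{p,s}(\varphi;\Omega)$, $B_{r,s}(\Omega)$, $M_{r,q}(\Omega)$ in the claimed estimate.

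Concretely, I would take $f\in W^{1}_{p}(\widetilde{\Omega},w)$ and set $g=\varphi^{\ast}(f)=f\circ\varphi$. Since $w\in A_p$ and $\varphi$ is $w$-weighted $(p,s)$-quasiconformal, Theorem~\ref{CompTh} yields $g\in L^{1}_{s}(\Omega)$ together with
\[
\|g\|_{L^{1}_{s}(\Omega)}\leq K_{p,s}(\varphi;\Omega)\,\|\nabla f\|_{L_{p}(\widetilde{\Omega},w)}.
\]
Applying the hypothesized $(r,s)$-Poincar\'e-Sobolev inequality on $\Omega$ to $g$ gives
\[
\inf_{c\in\mathbb R}\|g-c\|_{L_{r}(\Omega)}\leq B_{r,s}(\Omega)\,\|g\|_{L^{1}_{s}(\Omega)}.
\]

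For the transfer step, note that for every constant $c$ the function $(f-c)\circ\varphi$ equals $g-c$, so it suffices to bound $\|f-c\|_{L_q(\widetilde{\Omega})}$ by $\|g-c\|_{L_r(\Omega)}$. Because $\varphi$ has the Luzin $N$-property and is a Sobolev homeomorphism of finite weighted distortion, the change-of-variables formula is valid, and a H\"older inequality with exponents $r/q$ and $r/(r-q)$ applied to $\int_{\widetilde{\Omega}}|h|^{q}\,dy=\int_{\Omega}|h\circ\varphi|^{q}\,|J(x,\varphi)|\,dx$ (compare with the characterization of bounded inverse composition operators from \cite{VU04,VU05} recalled just before Theorem~\ref{emb}) gives
\[
\|h\|_{L_q(\widetilde{\Omega})}\leq M_{r,q}(\Omega)\,\|h\circ\varphi\|_{L_r(\Omega)}
\]
for any $h\in L_r$-class. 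Applying this with $h=f-c$ and then minimizing over $c\in\mathbb R$ yields
\[
\inf_{c\in\mathbb R}\|f-c\|_{L_q(\widetilde{\Omega})}\leq M_{r,q}(\Omega)\inf_{c\in\mathbb R}\|g-c\|_{L_r(\Omega)}.
\]
Chaining the three inequalities produces the desired estimate with constant bounded by $K_{p,s}(\varphi;\Omega)\,M_{r,q}(\Omega)\,B_{r,s}(\Omega)$, and the infimum over $c$ is the definition of $B_{q,p}(\widetilde{\Omega},w)$.

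The only nontrivial step is the justification of the change-of-variables formula that powers the inverse-composition bound; once the Luzin $N$-property is invoked and the finiteness of $M_{r,q}(\Omega)$ is used to apply H\"older, everything is algebra. I do not expect any genuine obstacle beyond that, since the infimum commutes with multiplication by a positive constant, and the PS inequality on $\Omega$ is a hypothesis. The proof will consist of writing out these three inequalities and composing them.
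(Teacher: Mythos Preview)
Your proposal is correct and follows essentially the same approach as the paper: pull back $f$ to $g=f\circ\varphi$ via Theorem~\ref{CompTh}, apply the $(r,s)$-Poincar\'e--Sobolev inequality on $\Omega$, and use the change-of-variables formula together with H\"older (exponents $r/q$ and $r/(r-q)$) to push the $L_r(\Omega)$ bound back to $L_q(\widetilde{\Omega})$, picking up exactly the three factors $K_{p,s}$, $B_{r,s}$, $M_{r,q}$. The paper presents the three steps in a slightly different order but the argument is identical.
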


\begin{proof}
Let $f\in W^{1}_{p}(\widetilde{\Omega},w)$. By the conditions of the theorem there exists a $w$-weighted $(p,s)$-quasiconformal homeomorphism $\varphi:\Omega\to\widetilde{\Omega}$. By Theorem~\ref{CompTh} the composition operator
\[
\varphi^*: L^1_p(\widetilde{\Omega},w) \to L^1_s(\Omega)
\]
is bounded. Since the bounded domain $\Omega$ is an $(r,s)$-Poincar\'e-Sobolev domain then a function $g=\varphi^*(f)\in W^1_s(\Omega)$.

Taking into account the change of variable formula for the Lebesgue integral and the H\"older's inequality we get
\begin{equation}\label{ne1}
\begin{split}
\inf\limits_{c \in \mathbb R}\biggr(\int_{\widetilde{\Omega}}|f(y)-c|^q dy\biggr)^{\frac{1}{q}}
&= \inf\limits_{c \in \mathbb R}\biggr(\int_{{\Omega}}|f(\varphi(x))-c|^q |J(x,\varphi)| dx\biggr)^{\frac{1}{q}} \\
&\leq \biggr(\int_{{\Omega}} |J(x,\varphi)|^{\frac{r}{r-q}} dx\biggr)^{\frac{r-q}{rq}}
\inf\limits_{c \in \mathbb R}\biggr(\int_{{\Omega}}|f(\varphi(x))-c|^r dx\biggr)^{\frac{1}{r}} \\
&= M_{r,q}(\Omega) \inf\limits_{c \in \mathbb R}\biggr(\int_{{\Omega}}|g(x)-c|^r dx\biggr)^{\frac{1}{r}}.
\end{split}
\end{equation}

Since the domain $\Omega$ is an $(r,s)$-Poincar\'e-Sobolev domain the following inequality holds:
\begin{equation}\label{ne2}
\inf\limits_{c \in \mathbb R}\biggr(\int_{{\Omega}}|g(x)-c|^r dx\biggr)^{\frac{1}{r}}
\leq B_{r,s}(\Omega) \biggr(\int_{{\Omega}}|\nabla g(x)|^s dx\biggr)^{\frac{1}{s}}.
\end{equation}
Combining two previous inequalities \eqref{ne1} and \eqref{ne2}, we have
\[
\inf\limits_{c \in \mathbb R}\biggr(\int_{\widetilde{\Omega}}|f(y)-c|^q dy\biggr)^{\frac{1}{q}}
\leq M_{r,q}(\Omega) B_{r,s}(\Omega) \biggr(\int_{{\Omega}}|\nabla g(x)|^s dx\biggr)^{\frac{1}{s}}.
\]
By Theorem~\ref{CompTh}
\[
\|g\|_{L^1_s(\Omega)} \leq K_{p,s}(\varphi;\Omega) \|f\|_{L^1_p(\widetilde{\Omega},w)}.
\]
Finally we obtain
\[
\inf\limits_{c \in \mathbb R}\biggr(\int_{\widetilde{\Omega}}|f(y)-c|^q dy\biggr)^{\frac{1}{q}}
\leq K_{p,s}(\varphi;\Omega) M_{r,q}(\Omega) B_{r,s}(\Omega)
\biggr(\int_{\widetilde{\Omega}}|\nabla f(y)|^p w(y) dy\biggr)^{\frac{1}{p}}.
\]

It means that
\[
B_{q,p}(\widetilde{\Omega},w)\leq K_{p,s}(\varphi;\Omega)M_{r,q}(\Omega)B_{r,s}(\Omega).
\]
\end{proof}

\section{Spectral estimates in H\"older singularities domains}
By Theorem~\ref{thmemb}, the embedding operator $W^1_p(\Omega_{\gamma},|x|^{\alpha})\hookrightarrow L_{q}(\Omega_{\gamma})$
is compact for any $1<q<p^*$ and by Theorem \ref{minthm}, the first non-trivial Neumann eigenvalue $\lambda_{p,q}(\Omega_{\gamma},|x|^{\alpha})$, $\max\big\{-n,\frac{p(n-\gamma)}{n}\big\}< \alpha <n(p-1)$, can be characterized as
\begin{multline*}
\lambda_{p,q}(\Omega_{\gamma},|x|^{\alpha})\\
=\inf \left\{\frac{\|\nabla u\|^p_{L_p(\Omega_{\gamma},|x|^{\alpha})}}{\|u\|^p_{L_q(\Omega_{\gamma})}} : u \in W^1_p(\Omega_{\gamma},|x|^{\alpha}) \setminus \{0\},
\int_{\Omega_{\gamma}} |u|^{q-2}u~dx=0 \right\}.
\end{multline*}
Furthermore, $\lambda_{p,q}(\Omega_{\gamma},|x|^{\alpha})^{-\frac{1}{p}}$ is equal to the best constant $B_{q,p}(\Omega_{\gamma},|x|^{\alpha})$ in the weighted $(q,p)$-Poincar\'e-Sobolev inequality
\begin{equation*}
\inf\limits_{c \in \mathbb R}\left(\int_{\Omega_{\gamma}} |f(x)-c|^q~dx\right)^{\frac{1}{q}}\\
\leq B_{q,p}(\Omega_{\gamma}, |x|^{\alpha})
\left(\int_{\Omega_{\gamma}} |\nabla f(x)|^p |x|^{\alpha}~dx\right)^{\frac{1}{p}}
\end{equation*}
for every function $f \in W^{1}_{p}(\Omega_{\gamma},|x|^{\alpha})$.

\begin{theorem}\label{eigen}
Let
\[
\Omega_{\gamma}:=\left\{x=(x_1,x_2,\ldots,x_n)\in\mathbb R^n: 0<x_n<1, 0<x_i<x_n^{\gamma_i}, i=1,2,\ldots,n-1\right\},
\]
$\gamma_i\geq 1$, $\gamma :=1+\sum_{i=1}^{n-1}\gamma_i$, be domains with anisotropic H\"older
$\gamma$-singularities as defined in \eqref{domain}.

Let $\max\big\{-n,\frac{p(n-\gamma)}{n}\big\}< \alpha <n(p-1)$, $1<p<\alpha + \gamma$and $1<q<p^*=p\gamma/(\alpha+\gamma-p)$ and $a\in \mathcal{I}:=\Big(\max\Big\{0,\frac{n-p}{\alpha+\gamma-p}\Big\},\min\Big\{\frac{n}{\gamma},\frac{p(n-1)}{\alpha+\gamma-p}\Big\}\Big)$. Then, we have
\begin{multline*}
\frac{1}{\lambda_{p,q}(\Omega_{\gamma},|x|^{\alpha})} \leq a^{\frac{p}{q}-1} c_a^{-1}
\left(\sum_{i=1}^{n-1}(a\gamma_i-1)^2+n-1+a^2\right)^{\frac{p}{2}} B_{r,s}^p(\Omega_n) \\
\times \left(\frac{p-s}{np-s(a(\alpha+\gamma-p)+p)}\right)^{\frac{p-s}{s}}
\left(\frac{r-q}{a\gamma r -nq} \right)^{\frac{(r-q)p}{rq}},
\end{multline*}
for some $r,s$ depending upon $a$ such that $s<\min\{p,n\}$ and $1<s<r<{ns}/{(n-s})$. Here $B_{r,s}(\Omega_n)$ is the best constant in the $(r,s)$-Poincar\'e-Sobolev inequality in the domain $\Omega_n$ and the constant $c_a$ defined in Theorem~\ref{S-P-ineq}.
\end{theorem}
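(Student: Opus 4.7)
The plan is to combine the variational characterization of $\lambda_{p,q}(\Omega_\gamma,|x|^{\alpha})$ with the transfer principle for Poincar\'e--Sobolev inequalities established in Theorem~\ref{SPineq}. As noted in the discussion preceding the theorem, $\lambda_{p,q}(\Omega_\gamma,|x|^{\alpha})^{-1/p}$ coincides with the best constant $B_{q,p}(\Omega_\gamma,|x|^{\alpha})$ in the weighted $(q,p)$-Poincar\'e--Sobolev inequality on $\Omega_\gamma$, so it suffices to bound $B_{q,p}(\Omega_\gamma,|x|^{\alpha})^p$ from above.

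For a fixed $a\in\mathcal{I}$, I would select the auxiliary exponents $s$ and $r$ exactly as in the proof of Theorem~\ref{thmemb}: an $s\in(1,\min\{p,n\})$ lying below $np/(a(\alpha+\gamma-p)+p)$, and an $r\in(s,ns/(n-s))$ large enough that $q<a\gamma r/n$. This simultaneously ensures that (i)~$\Omega_n$ is an $(r,s)$-Poincar\'e--Sobolev domain (Lipschitz regularity plus Rellich--Kondrachov); (ii)~by Theorem~\ref{S-P-ineq}, $\varphi_a:\Omega_n\to\Omega_\gamma$ is an $|x|^{\alpha}$-weighted $(p,s)$-quasiconformal mapping with the Luzin $N$-property; and (iii)~the Jacobian integrability requirement $M_{r,q}(\Omega_n)<\infty$ of Theorem~\ref{SPineq} holds. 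I would then apply Theorem~\ref{SPineq} with $\Omega=\Omega_n$, $\widetilde{\Omega}=\Omega_\gamma$, $w=|x|^{\alpha}$, and $\varphi=\varphi_a$ to obtain
\[
B_{q,p}(\Omega_\gamma,|x|^{\alpha})\leq K_{p,s}(\varphi_a;\Omega_n)\,M_{r,q}(\Omega_n)\,B_{r,s}(\Omega_n).
\]

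The remaining step is an explicit evaluation of $M_{r,q}(\Omega_n)$. Using $J(x,\varphi_a)=a\,x_n^{a\gamma-n}$ from Theorem~\ref{S-P-ineq} and slicing $\Omega_n$ by $0<x_i<x_n$, $i=1,\dots,n-1$, the integral defining $M_{r,q}(\Omega_n)$ reduces to $a^{r/(r-q)}\int_0^1 x_n^{(a\gamma-n)r/(r-q)+n-1}\,dx_n$. The exponent condition $q<a\gamma r/n$ forces $(a\gamma-n)r/(r-q)+n-1>-1$, and a direct computation collapses the one-dimensional integral to $(r-q)/(a\gamma r-nq)$, yielding
\[
M_{r,q}(\Omega_n)=a^{1/q}\left(\frac{r-q}{a\gamma r-nq}\right)^{(r-q)/(rq)}.
\]
Substituting this together with the upper bound on $K_{p,s}(\varphi_a;\Omega_n)$ from Theorem~\ref{S-P-ineq}, raising to the $p$-th power, and collecting the prefactors via $a^{p/q}\cdot(ac_a)^{-1}=a^{p/q-1}c_a^{-1}$, produces the asserted estimate for $1/\lambda_{p,q}(\Omega_\gamma,|x|^{\alpha})$.

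The main obstacle is not any single estimate but the bookkeeping required to verify that the interval $\mathcal{I}$ is wide enough to accommodate parameters $(s,r)$ meeting \emph{all} of the structural constraints simultaneously: the admissible quasiconformality range $1<s<np/(a(\alpha+\gamma-p)+p)$, the Rellich--Kondrachov range $s<r<ns/(n-s)$, and the Jacobian integrability condition $q<a\gamma r/n$. Since exactly this compatibility was verified inside the proof of Theorem~\ref{thmemb} (via the intermediate exponent $t=np/(a(\alpha+\gamma-p)+p)$ and the inequality $nq/(a\gamma)<ns_0/(n-s_0)$), I would simply invoke that selection rather than repeat the argument.
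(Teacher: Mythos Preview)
Your proposal is correct and follows essentially the same approach as the paper: you invoke the parameter selection from the proof of Theorem~\ref{thmemb}, verify the hypotheses of Theorem~\ref{SPineq} for $\varphi_a:\Omega_n\to\Omega_\gamma$ via Theorem~\ref{S-P-ineq}, compute $M_{r,q}(\Omega_n)$ explicitly from $J(x,\varphi_a)=a\,x_n^{a\gamma-n}$, and then combine $B_{q,p}(\Omega_\gamma,|x|^{\alpha})^p\leq K_{p,s}^p\,M_{r,q}^p\,B_{r,s}^p$ with the identity $\lambda_{p,q}^{-1}=B_{q,p}^p$. This is exactly the structure of the paper's own proof.
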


\begin{proof}
Let $\max\big\{-n,\frac{p(n-\gamma)}{n}\big\}< \alpha <n(p-1)$, $1<p<\alpha + \gamma$, $a\in \mathcal{I}$ and $1<q<p^*=p\gamma/(\alpha+\gamma-p)$. Then proceeding along the lines of the proof of Theorem \ref{thmemb}, we get the existence of $r,s$ depedning on $a$ such that $1<s<r<\frac{ns}{n-s}$, $s<\frac{np}{a(\alpha+\gamma-p)+p}$, $q<\frac{a\gamma}{n}r<r$. Now, we will verify the conditions of Theorem~\ref{SPineq} to prove the result.
To this end, first we observe that $|x|^\alpha\in A_p$ for the given range of $\alpha$. Next, by Theorem~\ref{S-P-ineq} the mapping $\varphi_a:\Omega_n\to \Omega_{\gamma}$ is a $|x|^{\alpha}$-weighted $(p,s)$-quasiconformal mapping for $1<p<\alpha + \gamma$, $1<s<{np}/{(a(\alpha+\gamma-p)+p)}$ and has the Luzin $N$-property.
Since $\varphi_a$ is a $|x|^{\alpha}$-weighted $(p,s)$-quasiconformal mapping, by Theorem \ref{S-P-ineq}, the constant $K_{p,s}(\varphi_a;\Omega_n)$ is finite and satisfies the estimate
\begin{multline}\label{est1}
K_{p,s}(\varphi_a;\Omega_n) \\
\leq \frac{\sqrt{\sum_{i=1}^{n-1}(a\gamma_i-1)^2+n-1+a^2}}{\sqrt[p]{a c_a}}\cdot \left(\frac{p-s}{np-s(a(\alpha+\gamma-p)+p)}\right)^{\frac{p-s}{ps}}.
\end{multline}

The domain $\Omega_n$ is a Lipschitz domain and so is an $(r,s)$-Poincar\'e-Sobolev domain, i.e. $B_{r,s}(\Omega_n)<\infty$.

Next, we observe that
\begin{multline}
\label{est2}
M_{r,q}(\Omega_n)= \biggr(\int_{\Omega_n}\left|J(x,\varphi_a)\right|^{\frac{r}{r-q}}\, dx\biggr)^{\frac{r-q}{rq}}
= a^{\frac{1}{q}}
\biggr(\int_{\Omega_n}\left(x_n^{a\gamma -n}\right)^{\frac{r}{r-q}}\, dx\biggr)^{\frac{r-q}{rq}} \\
= a^{\frac{1}{q}}
\biggr(\int_{0}^1\left(x_n^{a\gamma -n}\right)^{\frac{r}{r-q}}
\biggr(\int_{0}^{x_n}\,dx_1 \ldots \int_{0}^{x_n}\,dx_{n-1}\biggr)
dx_n\biggr)^{\frac{r-q}{rq}} \\
= a^{\frac{1}{q}}
\biggr(\int_{0}^1x_n^{\frac{(a\gamma -n)r}{r-q}+n-1} dx_n \biggr)^{\frac{r-q}{rq}}
= a^{\frac{1}{q}}
\biggr(\frac{r-q}{a\gamma r -nq} \biggr)^{\frac{r-q}{rq}}<\infty,
\end{multline}
since $1<q<\frac{a \gamma}{n}r$.

So, the conditions of Theorem~\ref{SPineq} are fulfilled. Therefore, noting that $\lambda_{p,q}^{-1}(\Omega_{\gamma},|x|^{\alpha})=B_{q,p}^p(\Omega_{\gamma},|x^{\alpha}|)$, by Theorem~\ref{SPineq}, we obtain
\begin{multline*}
\frac{1}{\lambda_{p,q}(\Omega_{\gamma},|x|^{\alpha})}\leq K_{p,s}^p(\varphi_a;\Omega_n)M_{r,q}^p(\Omega_n)B^p_{r,s}(\Omega_n) \\
\leq a^{\frac{p}{q}-1} c_a^{-1}
\left(\sum_{i=1}^{n-1}(a\gamma_i-1)^2+n-1+a^2\right)^{\frac{p}{2}} B_{r,s}^p(\Omega_n) \\
\times \left(\frac{p-s}{np-s(a(\alpha+\gamma-p)+p)}\right)^{\frac{p-s}{s}}
\left(\frac{r-q}{a\gamma r -nq} \right)^{\frac{(r-q)p}{rq}},
\end{multline*}
where we have also used the estimates \eqref{est1} and \eqref{est2}. 
\end{proof}

\section{Existence results}

Throughout this section, we assume that $\max\big\{-n,\frac{p(n-\gamma)}{n}\big\}< \alpha <n(p-1)$, $1<p<\alpha+\gamma$ and $q=2$ unless otherwise stated. Let
$$
X:=\Big\{u\in W^{1}_p(\Omega_{\gamma},|x|^\alpha):\int_{\Omega_{\gamma}}u\,dx=0\Big\}.
$$

By Theorem~\ref{SPineq} and Theorem~\ref{eigen} we have the following Poincar\'e-Sobolev inequality
\[
\biggr(\int_{\Omega_{\gamma}}|u(y)-u_{\Omega_{\gamma}}|^2 dy\biggr)^{\frac{1}{2}} \leq B_{2,p}(\Omega_{\gamma},{|y|^\alpha})
\biggr(\int_{\Omega_{\gamma}}|\nabla u(y)|^p {|y|^\alpha} dy\biggr)^{\frac{1}{p}},\,\,\, {u}\in W^{1,p}(\Omega_{\gamma},{|y|^\alpha}),
\]
where $u_{\Omega_{\gamma}}=\frac{1}{|\Omega_{\gamma}|}\int_{\Omega_{\gamma}}u\,dy$.

Hence, we can endow the norm $\|\cdot\|_{X}$ on $X$ defined by
\begin{equation}\label{mfn}
\|u\|_{X}=\left(\int_{\Omega_{\gamma}}|\nabla u|^p \,|x|^\alpha\,dx\right)^\frac{1}{p}.
\end{equation}

The Lebesgue space $Y:=L_2(\Omega_{\gamma})$ be endowed with the norm
\begin{equation}\label{lqn}
\|u\|_{Y}:=\left(\int_{\Omega_{\gamma}}|u|^2\,dx\right)^\frac{1}{2}.
\end{equation}

\subsection{Existence results}

Let as formulate the regularity results for the Neumann $(p,q)$-eigenvalue problem.

\begin{theorem}\label{newthm}
Let $1<p<\gamma+\alpha$. Then the following properties hold:
\vskip 0.2cm
\noindent
$(a)$ There exists a sequence $\{\phi_n\}_{n\in\mathbb{N}}\subset X\cap Y$ such that $\|\phi_n\|_{Y}=1$ and for every $\psi\in X$, we have
\begin{equation}\label{its}
\int_{\Omega_{\gamma}}|\nabla\,\phi_{n+1}|^{p-2}\nabla\,\phi_{n+1} \nabla\,\psi\,|x|^\alpha\,dx=\mu_n\int_{\Omega_{\gamma}}\phi_{n}\psi\,dx,
\end{equation}
where
\begin{equation*}\label{subopmin}
\mu_n\geq \lambda:=\inf\left\{\int_{\Omega_{\gamma}}|\nabla\,\phi|^p\,|x|^\alpha\,dx:\phi\in X\cap {Y},\,\|\phi\|_{Y}=1\right\}.
\end{equation*}
\vskip 0.2cm
\noindent
(b) Moreover, the sequences $\{\mu_n\}_{n\in\mathbb{N}}$ and $\{\|\phi_{n+1}\|_{X}^{p}\}_{n\in\mathbb{N}}$ given by \eqref{its} are nonincreasing and converge to the same limit $\mu$, which is bounded below by $\lambda$. Further, there exists a subsequence $\{n_j\}_{j\in\mathbb{N}}$ such that both $\{\phi_{n_j}\}_{j\in\mathbb{N}}$ and $\{\phi_{n_{j+1}}\}_{j\in\mathbb{N}}$ converges in $X$ to the same limit $\phi\in X\cap Y$ with $\|\phi\|_{Y}=1$ and $(\mu,\phi)$ is an eigenpair of \eqref{pr-m}.
\end{theorem}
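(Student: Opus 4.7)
The plan is to construct $\{\phi_n\}$ by a nonlinear inverse iteration, derive a two-sided ordering that yields (a) and the monotonicity in (b), and then extract the eigenpair via compactness combined with Minty's monotonicity trick for the $p$-Laplacian.

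\emph{Iteration and part (a).} I would start from any $\phi_1\in X\cap Y$ with $\|\phi_1\|_Y=1$. Given $\phi_n\in X\cap Y$ with $\|\phi_n\|_Y=1$, let $v_n$ be the unique minimizer over $X$ of
\[
J_n(v)=\frac{1}{p}\int_{\Omega_\gamma}|\nabla v|^p\,|x|^\alpha\,dx-\int_{\Omega_\gamma}\phi_n v\,dx.
\]
Existence comes from the direct method: $J_n$ is coercive because Theorem~\ref{minthm} gives $\|v\|_Y\le \lambda^{-1/p}\|v\|_X$ on $X$, weakly lower semicontinuous, and strictly convex since $p>1$. Its Euler--Lagrange equation is $\int|\nabla v_n|^{p-2}\nabla v_n\cdot\nabla\psi\,|x|^\alpha\,dx=\int\phi_n\psi\,dx$ for all $\psi\in X$. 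Since $\phi_n\not\equiv 0$, $v_n\neq 0$; setting $\phi_{n+1}:=v_n/\|v_n\|_Y$ and $\mu_n:=\|v_n\|_Y^{1-p}$ yields \eqref{its}, with $\phi_{n+1}\in X\cap Y$ (membership in $Y$ by Theorem~\ref{thmemb}) and $\|\phi_{n+1}\|_Y=1$. Testing \eqref{its} with $\phi_{n+1}\in X$ and H\"older in $Y$ gives $\|\phi_{n+1}\|_X^p\le \mu_n$; combined with the variational bound $\|\phi_{n+1}\|_X^p\ge \lambda$ this is $\mu_n\ge\lambda$, i.e.\ (a).

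\emph{Monotonicity, compactness, and identification of limits.} To obtain the matching estimate $\mu_{n+1}\le\|\phi_{n+1}\|_X^p$, I would test the $(n{+}1)$-st instance of \eqref{its} with $\phi_{n+1}\in X$ and apply H\"older:
\[
\mu_{n+1}=\int|\nabla\phi_{n+2}|^{p-2}\nabla\phi_{n+2}\cdot\nabla\phi_{n+1}\,|x|^\alpha\,dx\le \|\phi_{n+2}\|_X^{p-1}\|\phi_{n+1}\|_X\le \mu_{n+1}^{(p-1)/p}\|\phi_{n+1}\|_X.
\]
Chaining yields $\mu_1\ge\|\phi_2\|_X^p\ge\mu_2\ge\|\phi_3\|_X^p\ge\cdots\ge\lambda>0$, so both $\{\mu_n\}$ and $\{\|\phi_{n+1}\|_X^p\}$ are nonincreasing and share a common limit $\mu\ge\lambda$. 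Because $\{\phi_n\}$ is bounded in $X$, the compact embedding $X\hookrightarrow L_2$ of Theorem~\ref{thmemb} produces a subsequence with $\phi_{n_j}\rightharpoonup\phi$ in $X$ and $\phi_{n_j}\to\phi$ in $L_2$; after one further extraction, $\phi_{n_j+1}\rightharpoonup\phi'$ in $X$ and $\phi_{n_j+1}\to\phi'$ in $L_2$, both of unit $L_2$-norm. The identity $\int\phi_{n_j}\phi_{n_j+1}\,dx=\|\phi_{n_j+1}\|_X^p/\mu_{n_j}\to 1$ gives $\|\phi_{n_j+1}-\phi_{n_j}\|_Y^2=2(1-\int\phi_{n_j}\phi_{n_j+1}\,dx)\to 0$, forcing $\phi'=\phi$.

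\emph{Strong $X$-convergence and eigenpair identification.} I would upgrade to strong $X$-convergence by Minty's trick: testing \eqref{its} with $\psi=\phi_{n_j+1}-\phi\in X$, the right-hand side vanishes by Cauchy--Schwarz in $L_2$ and the strong $L_2$-convergence, while weak $X$-convergence kills $\int|\nabla\phi|^{p-2}\nabla\phi\cdot\nabla(\phi_{n_j+1}-\phi)|x|^\alpha\,dx$, leaving
\[
\int_{\Omega_\gamma}\bigl(|\nabla\phi_{n_j+1}|^{p-2}\nabla\phi_{n_j+1}-|\nabla\phi|^{p-2}\nabla\phi\bigr)\cdot\nabla(\phi_{n_j+1}-\phi)\,|x|^\alpha\,dx\to 0.
\]
The standard strict monotonicity inequalities for $\xi\mapsto|\xi|^{p-2}\xi$ (separately for $p\ge 2$ and $1<p<2$) then force $\phi_{n_j+1}\to\phi$ strongly in $X$, so $\|\phi\|_X^p=\mu$. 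Uniform convexity of $X$ (whose norm is that of a weighted $L_p$ via the gradient) combined with $\|\phi_{n_j}\|_X\to\|\phi\|_X$ and $\phi_{n_j}\rightharpoonup\phi$ upgrades to strong convergence $\phi_{n_j}\to\phi$ in $X$ as well. Finally, strong convergence gives $|\nabla\phi_{n_j+1}|^{p-2}\nabla\phi_{n_j+1}\to |\nabla\phi|^{p-2}\nabla\phi$ in $L_{p/(p-1)}(\Omega_\gamma;|x|^\alpha)$, so letting $j\to\infty$ in \eqref{its} and extending by adding constants (using $\int\phi\,dx=0$) gives \eqref{weak} with $q=2$ and $\|\phi\|_Y^{p-2}=1$, i.e.\ $(\mu,\phi)$ is an eigenpair of \eqref{pr-m}. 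The main obstacle I expect is exactly this weak-to-strong upgrade in the nonlinear $p$-Laplace term: the two-sided chain derived in the monotonicity step is essential, because it guarantees that $\mu_{n_j}$ and $\|\phi_{n_j+1}\|_X^p$ approach the \emph{same} limit $\mu$, which is what forces $\int\phi_{n_j}\phi_{n_j+1}\,dx\to 1$ and hence the $L_2$-collapse on which Minty's argument rests.
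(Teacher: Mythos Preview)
Your argument is correct and complete: the inverse-iteration construction via minimization of $J_n$, the two-sided chain $\mu_n\ge\|\phi_{n+1}\|_X^p\ge\mu_{n+1}$, the $L_2$-collapse $\|\phi_{n_j+1}-\phi_{n_j}\|_Y\to 0$, and the Minty-type upgrade to strong $X$-convergence all go through as written in this weighted setting (note that for $q=2$ the constraint $\int_{\Omega_\gamma}|u|^{q-2}u\,dx=0$ coincides with $\int_{\Omega_\gamma}u\,dx=0$, so your appeal to the Poincar\'e inequality on $X$ is legitimate).

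The paper, however, takes a different route. Rather than arguing directly, it packages the problem into the abstract framework of Ercole's inverse-iteration theorem: one introduces operators $A:X\to X^*$ and $B:Y\to Y^*$ by $\langle A\phi,\psi\rangle=\int_{\Omega_\gamma}|\nabla\phi|^{p-2}\nabla\phi\cdot\nabla\psi\,|x|^\alpha\,dx$ and $\langle B\phi,\psi\rangle=\int_{\Omega_\gamma}\phi\psi\,dx$, verifies a list of structural hypotheses $(H_1)$--$(H_5)$ (homogeneity, the sharp H\"older-type inequality $\langle A\phi,\psi\rangle\le\|\phi\|_X^{p-1}\|\psi\|_X$ with its equality case, and solvability of $Au=B\psi$ via the Minty--Browder theorem), checks continuity of $A,B$, uniform convexity of $X$, and compactness of $X\hookrightarrow Y$, and then simply cites \cite[Theorem~1]{Ercole} for both (a) and (b). What you have done is, in effect, reprove Ercole's abstract result in this concrete instance: your minimization step is exactly the verification of $(H_5)$, your chain of inequalities is the concrete form of the abstract $(H_3)$--$(H_4)$ argument, and your Minty trick plus uniform convexity is how Ercole obtains strong convergence. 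Your approach is self-contained and makes the mechanism transparent; the paper's approach is shorter on the page and isolates the structural properties $(H_1)$--$(H_5)$ that make the scheme work, at the cost of delegating the core argument to an external reference.
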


\begin{theorem}\label{subopthm1}
Let $1<p<\gamma+\alpha$ and $q=2$. Suppose $\{u_n\}_{n\in\mathbb{N}}\subset X\cap Y$ is a minimizing sequence for $\lambda$, that is $\|u_n\|_Y=1$ and $\|u_n\|_X^{p}\to\lambda$. Then there exists a subsequence $\{u_{n_j}\}_{j\in\mathbb{N}}$ which converges weakly in $X$ to $u\in X\cap Y$ such that $\|u\|_Y=1$ and
$
\lambda=\|u\|^p_{X}.
$
Moreover, $u$ is an eigenfunction of \eqref{pr-m} corresponding to $\lambda$ and its associated eigenfunctions are precisely the scalar multiple of those vectors at which $\lambda$ is reached.
\end{theorem}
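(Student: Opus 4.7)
The plan is to apply the direct method of the calculus of variations, combining reflexivity of $X$, the compact embedding into $Y$ provided by Theorem~\ref{thmemb}, and weak lower semicontinuity of the norm, and then derive the eigenvalue equation from a Lagrange multiplier argument adapted to the two constraints $\int u\,dx = 0$ and $\|u\|_Y = 1$.

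First, since $\|u_n\|_X^p \to \lambda$, the sequence $\{u_n\}$ is bounded in $X$. Because $|x|^\alpha \in A_p$, the space $W^{1}_{p}(\Omega_{\gamma},|x|^{\alpha})$ is reflexive, and $X$, being the kernel of the bounded linear functional $u \mapsto \int_{\Omega_\gamma} u\,dx$, is a closed (hence reflexive) subspace. I extract a subsequence $\{u_{n_j}\}$ with $u_{n_j} \rightharpoonup u$ weakly in $X$. Since $q = 2 < p^{*}$ in the admissible range, Theorem~\ref{thmemb} gives that $W^{1}_{p}(\Omega_{\gamma},|x|^{\alpha})$ embeds compactly into $Y = L_2(\Omega_\gamma)$, so $u_{n_j} \to u$ strongly in $Y$. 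This forces $\|u\|_Y = 1$, and passing to the limit in $\int_{\Omega_\gamma} u_{n_j}\,dx = 0$ (which is a continuous functional on $Y$ since $\Omega_\gamma$ is bounded) gives $\int_{\Omega_\gamma} u\,dx = 0$, so $u \in X \cap Y$ is admissible. Weak lower semicontinuity of $\|\cdot\|_X$ yields $\|u\|_X^p \le \liminf_j \|u_{n_j}\|_X^p = \lambda$, while admissibility forces $\|u\|_X^p \ge \lambda$. Hence $\|u\|_X^p = \lambda$ and $u$ realizes the infimum.

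To show that $u$ is an eigenfunction of \eqref{pr-m}, I perturb $u$ along directions that preserve both constraints. For a test function $\psi \in W^{1}_{p}(\Omega_{\gamma},|x|^{\alpha})$, set $\tilde\psi := \psi - |\Omega_\gamma|^{-1}\int_{\Omega_\gamma}\psi\,dx \in X$, and for small $\epsilon$ define
$$
v_\epsilon := \frac{u+\epsilon\tilde\psi}{\|u+\epsilon\tilde\psi\|_Y} \in X \cap Y, \qquad \|v_\epsilon\|_Y = 1.
$$
Since $v_0 = u$ minimizes $\|\cdot\|_X^p$ under these constraints, $\epsilon \mapsto \|v_\epsilon\|_X^p$ has vanishing derivative at $\epsilon = 0$. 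Writing this as differentiation of the quotient $\|u+\epsilon\tilde\psi\|_X^p / \|u+\epsilon\tilde\psi\|_Y^p$ and using $\|u\|_Y = 1$ produces
$$
\int_{\Omega_\gamma} |\nabla u|^{p-2}\nabla u \cdot \nabla\tilde\psi\, |x|^\alpha\,dx = \lambda \int_{\Omega_\gamma} u\,\tilde\psi\,dx.
$$
Because $\nabla(\psi-\tilde\psi) = 0$ and $\int_{\Omega_\gamma} u\,dx = 0$, the constant piece $\psi - \tilde\psi$ contributes nothing on either side, so the identity extends to every $\psi \in W^{1}_{p}(\Omega_{\gamma},|x|^{\alpha})$. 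This is exactly \eqref{weak} with $q=2$ and the factor $\|u\|_{L_2(\Omega_\gamma)}^{p-2} = 1$, so $(\lambda, u)$ is an eigenpair.

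The scalar-multiple claim follows from homogeneity: for any $t \in \mathbb R \setminus\{0\}$, $tu$ still lies in $X$, the Rayleigh quotient $\|tu\|_X^p/\|tu\|_Y^p = \|u\|_X^p/\|u\|_Y^p = \lambda$ is unchanged, and the weak formulation is covariant under this rescaling (with the $\|u\|_{L_q}^{p-q}$ factor absorbing the scaling). The most delicate step is the differentiation of the normalized perturbation $v_\epsilon$ at $\epsilon = 0$: one must combine the $p$-homogeneity of $\|\cdot\|_X^p$ with the quotient rule so that the resulting multiplier is \emph{precisely} $\lambda = \|u\|_X^p$ rather than some other constant; once this is done, the remaining steps are standard direct-method arguments in weighted Sobolev spaces.
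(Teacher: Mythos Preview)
Your argument is essentially correct and self-contained, but it proceeds along a genuinely different route from the paper. The paper does not carry out the direct method at all: instead it packages the problem into the abstract framework of Ercole, verifies that the operators $A$ and $B$ from \eqref{ma}--\eqref{mb} satisfy the hypotheses $(H_1)$--$(H_5)$ of Lemma~\ref{auxlmab}, observes that $X$ is uniformly convex and compactly embedded in $Y$, and then cites \cite[Proposition~2]{Ercole} (exactly as in the proof of Theorem~\ref{newthm}(b), with that proposition in place of \cite[Theorem~1]{Ercole}). Your approach has the advantage of being transparent and not relying on an external abstract result; the paper's approach has the advantage that once $(H_1)$--$(H_5)$ are checked, both Theorem~\ref{newthm} and Theorem~\ref{subopthm1} fall out simultaneously from the same machinery.

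One small gap in your write-up: the last assertion says the eigenfunctions for $\lambda$ are \emph{precisely} the scalar multiples of minimizers, and you only argue one inclusion (scalar multiples of minimizers are eigenfunctions). For the converse, if $v\in W^1_p(\Omega_\gamma,|x|^\alpha)\setminus\{0\}$ solves \eqref{weak} with eigenvalue $\lambda$ and $q=2$, test first with the constant $1$ to get $\int_{\Omega_\gamma} v\,dx=0$ (so $v\in X$), and then with $v$ itself to obtain $\|v\|_X^p=\lambda\|v\|_Y^p$; hence $v/\|v\|_Y$ attains $\lambda$ and $v$ is a scalar multiple of a minimizer. Adding this line completes the equivalence.
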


\begin{remark}
Theorems~\ref{newthm}--\ref{subopthm1} are correct in the case of Lipschitz domains $\Omega\subset\mathbb R^n$. In this case $p^{*}_{n}=np/(n-p)$, where $1<p<n$.
\end{remark}

\subsection{Operators associated to eigenvalue problems}

Let us denote by $X^*$ and $Y^*$ denotes the dual of $X$ and $Y$ respectively.
Let us define the operator $A:X\to X^*$ by
\begin{equation}\label{ma}
\begin{split}
\langle A\phi,\psi\rangle&=\int_{\Omega_{\gamma}}|\nabla \phi|^{p-2}\nabla\phi\nabla\psi\,|x|^\alpha\,dx,\quad \forall \phi,\psi\in X
\end{split}
\end{equation}
and $B:Y\to Y^*$ by
\begin{equation}\label{mb}
\begin{split}
\langle B\phi,\psi\rangle&=\int_{\Omega_{\gamma}}\phi\psi\,dx,\quad \forall \phi,\psi\in Y.
\end{split}
\end{equation}
First we state some useful results. The following result from \cite[Theorem $9.14$]{var} will be useful for us.
\begin{theorem}\label{MBthm}
Let $V$ be a real separable reflexive Banach space and $V^{*}$ be the dual of $V$. Assume that $A:V\to V^{*}$ is a bounded, continuous, coercive and monotone operator. Then $A$ is surjective, i.e., given any $f\in V^{*}$, there exists $u\in V$ such that $A(u)=f$. If $A$ is strictly monotone, then $A$ is also injective.
\end{theorem}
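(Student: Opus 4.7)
The plan is to follow the classical Minty--Browder strategy: reduce to a finite-dimensional problem via Galerkin approximation, use coercivity for \emph{a priori} bounds, and then identify the weak limit as a solution by exploiting monotonicity through Minty's trick. Since $V$ is separable, I would first fix an increasing sequence of finite-dimensional subspaces $V_1 \subset V_2 \subset \cdots$ whose union is dense in $V$. On each $V_n$ I seek $u_n \in V_n$ satisfying $\langle A(u_n), v\rangle = \langle f, v\rangle$ for every $v \in V_n$. Choosing a basis of $V_n$ turns this into a finite-dimensional problem $F_n(u_n) = 0$ for a continuous map $F_n : \mathbb{R}^{d_n} \to \mathbb{R}^{d_n}$; coercivity forces $F_n$ to point outward on a sufficiently large ball, and Brouwer's fixed point theorem (applied via the usual retraction argument) produces a solution $u_n$.

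Next, testing the Galerkin identity with $v = u_n$ gives $\langle A(u_n), u_n\rangle = \langle f, u_n\rangle \leq \|f\|_{V^*}\|u_n\|_V$, so coercivity bounds $\|u_n\|_V$ uniformly in $n$. Boundedness of $A$ yields a corresponding bound on $\|A(u_n)\|_{V^*}$. Using reflexivity, I extract a subsequence (not relabelled) with $u_n \rightharpoonup u$ in $V$ and $A(u_n) \rightharpoonup \chi$ in $V^*$. The Galerkin identity passes to the limit on each fixed $V_m$, because any $v \in V_m$ lies in $V_n$ for all $n \geq m$; density of $\bigcup_m V_m$ then gives $\chi = f$ in $V^*$. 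Combined with $\langle A(u_n), u_n\rangle = \langle f, u_n\rangle \to \langle f, u\rangle$, this pins down the key energy limit $\langle A(u_n), u_n\rangle \to \langle \chi, u\rangle$.

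For the final identification I would invoke Minty's trick. Monotonicity gives $\langle A(u_n) - A(w), u_n - w\rangle \geq 0$ for every $w \in V$ and every $n$; taking the limit (using the energy convergence just established together with $u_n \rightharpoonup u$ and $A(u_n) \rightharpoonup f$) produces
\[
\langle f - A(w),\, u - w\rangle \geq 0 \quad \text{for all } w \in V.
\]
Substituting $w = u - tv$ with $t > 0$ and $v \in V$, dividing by $t$, and letting $t \downarrow 0$ using the assumed continuity of $A$ along rays, I obtain $\langle f - A(u), v\rangle \geq 0$ for every $v \in V$; replacing $v$ by $-v$ forces $A(u) = f$, proving surjectivity. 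Injectivity under strict monotonicity is immediate: if $A(u_1) = A(u_2)$, then $\langle A(u_1) - A(u_2), u_1 - u_2\rangle = 0$ forces $u_1 = u_2$.

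The main obstacle is the Galerkin step, where one must produce a zero of the finite-dimensional map $F_n$ in a clean, self-contained way; this requires a topological ingredient (Brouwer's theorem or a degree argument) together with a careful use of coercivity to locate the zero inside a ball of controlled radius. Once existence at the Galerkin level is secured, the remaining steps---extracting a weak limit by reflexivity and running Minty's monotone-operator identification---are standard, but they depend crucially on the energy convergence $\langle A(u_n), u_n\rangle \to \langle f, u\rangle$, which itself is what makes the freedom to test with $u_n$ in the Galerkin equation so valuable.
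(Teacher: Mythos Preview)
Your argument is the standard Minty--Browder/Galerkin proof and is correct as outlined. Note, however, that the paper does not prove this theorem at all: it is simply quoted from \cite[Theorem~9.14]{var} as a known tool, so there is no in-paper proof to compare against. Your proposal therefore goes well beyond what the paper does, supplying the classical proof that the cited reference contains.
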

Moreover, for the following algebraic inequality, see \cite[Lemma $2.1$]{Dama}.
\begin{lemma}\label{alg}
Let $1<p<\infty$. Then for any $a,b\in\mathbb{R}^N$, there exists a positive constant $C=C(p)$ such that
\begin{equation}\label{algineq}
(|a|^{p-2}a-|b|^{p-2}b, a-b)\geq
C(|a|+|b|)^{p-2}|a-b|^2.
\end{equation}
\end{lemma}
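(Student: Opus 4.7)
The plan is to establish the inequality by expressing the left-hand side as a one-dimensional integral along the segment from $b$ to $a$ and then bounding the resulting integrand from below. Setting $c(t)=b+t(a-b)$ and $F(t)=|c(t)|^{p-2}c(t)$, I would use the identity $F(1)-F(0)=|a|^{p-2}a-|b|^{p-2}b$. Taking the inner product with $a-b$ and differentiating $F$ directly produces the representation
\[
\bigl(|a|^{p-2}a-|b|^{p-2}b\bigr)\cdot(a-b)=\int_0^1\!\Bigl(|c(t)|^{p-2}|a-b|^2+(p-2)|c(t)|^{p-4}\bigl(c(t)\cdot(a-b)\bigr)^2\Bigr)dt.
\]

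Next I would separate the cases $p\geq 2$ and $1<p<2$ to handle the sign of $p-2$. For $p\geq 2$ the two terms in the integrand are both nonnegative, so the integrand dominates $|c(t)|^{p-2}|a-b|^2$. For $1<p<2$ the Cauchy--Schwarz inequality $\bigl(c(t)\cdot(a-b)\bigr)^2\leq |c(t)|^2|a-b|^2$ combined with $p-2<0$ yields the pointwise lower bound $(p-1)|c(t)|^{p-2}|a-b|^2$. Either way, the remaining task is to show
\[
\int_0^1 |c(t)|^{p-2}\,dt\geq c_p\bigl(|a|+|b|\bigr)^{p-2}
\]
for some positive constant $c_p$ depending only on $p$, after which the target constant $C=C(p)$ follows immediately.

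I expect this last reduction to be the main obstacle, because the sign of $p-2$ flips whether one should seek a pointwise or an averaged comparison with $(|a|+|b|)^{p-2}$. For $1<p<2$ the bound is immediate from $|c(t)|\leq (1-t)|b|+t|a|\leq |a|+|b|$ together with $p-2<0$. For $p\geq 2$ a pointwise comparison fails (the integrand can vanish, for instance when $a=-b$), so instead I would assume without loss of generality that $|a|\geq |b|$ and observe that for $t\in[3/4,1]$,
\[
|c(t)|\geq t|a|-(1-t)|b|\geq (2t-1)|a|\geq |a|/2.
\]
Integrating on $[3/4,1]$ and using $|a|\geq(|a|+|b|)/2$ yields the desired lower bound with an explicit $c_p$ of order $4^{1-p}$. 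Combining the two regimes delivers the lemma with constant $C(p)=\min\{p-1,\,4^{1-p}\}$ up to an inessential numerical factor.
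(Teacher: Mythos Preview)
Your argument is correct: the integral representation along the segment is the standard device, and your case split together with the estimate on $\int_0^1|c(t)|^{p-2}\,dt$ is carried out accurately. The paper does not supply its own proof of this lemma but simply quotes it from \cite[Lemma~2.1]{Dama}, so there is nothing further to compare; your proof is precisely the classical one underlying that reference.
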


Next, we prove the following result.
\begin{lemma}\label{newlem}
$(i)$ The operators $A$ defined by \eqref{ma} and $B$ defined by \eqref{mb} are continuous. $(ii)$ Moreover, $A$ is bounded, coercive and monotone.
\end{lemma}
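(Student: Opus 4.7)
The plan is to dispatch $B$ immediately by Cauchy--Schwarz and then analyze $A$ through its four stated properties in order: boundedness, coercivity, monotonicity, continuity. Only the last requires real work; the first three are direct consequences of Hölder's inequality, the very definition of $\|\cdot\|_X$, and Lemma~\ref{alg}.

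For $B$, Cauchy--Schwarz in $L_2(\Omega_{\gamma})$ gives $|\langle B\phi-B\psi,\eta\rangle|\le\|\phi-\psi\|_Y\|\eta\|_Y$, so $B$ is Lipschitz, hence continuous. For boundedness of $A$, factor the weight as $|x|^\alpha=|x|^{\alpha(p-1)/p}\cdot|x|^{\alpha/p}$ and apply Hölder with conjugate exponents $p/(p-1)$ and $p$:
\[
|\langle A\phi,\psi\rangle|\le\int_{\Omega_\gamma}|\nabla\phi|^{p-1}|\nabla\psi||x|^\alpha\,dx\le\|\phi\|_X^{\,p-1}\|\psi\|_X,
\]
so $\|A\phi\|_{X^*}\le\|\phi\|_X^{p-1}$, which is bounded on bounded sets. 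Coercivity is even more direct: $\langle A\phi,\phi\rangle=\|\phi\|_X^p$, whence $\langle A\phi,\phi\rangle/\|\phi\|_X=\|\phi\|_X^{p-1}\to\infty$ as $\|\phi\|_X\to\infty$. Monotonicity follows from applying Lemma~\ref{alg} pointwise with $a=\nabla\phi(x)$, $b=\nabla\psi(x)$: the integrand $(|\nabla\phi|^{p-2}\nabla\phi-|\nabla\psi|^{p-2}\nabla\psi)\cdot(\nabla\phi-\nabla\psi)$ is pointwise nonnegative, so $\langle A\phi-A\psi,\phi-\psi\rangle\ge 0$.

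The main obstacle is continuity of $A$. Given $\phi_n\to\phi$ in $X$, set $w_n:=|\nabla\phi_n|^{p-2}\nabla\phi_n-|\nabla\phi|^{p-2}\nabla\phi$. Using the same weight factorization and Hölder as above,
\[
\|A\phi_n-A\phi\|_{X^*}\le\left(\int_{\Omega_\gamma}|w_n|^{p/(p-1)}|x|^\alpha\,dx\right)^{(p-1)/p},
\]
so it suffices to show $w_n\to 0$ in $L_{p/(p-1)}(\Omega_\gamma,|x|^\alpha)$. By the standard subsequence--subsequence principle, work along an arbitrary subsequence. Since $\nabla\phi_n\to\nabla\phi$ in $L_p(\Omega_\gamma,|x|^\alpha)$, a further subsequence converges a.e., and continuity of $\xi\mapsto|\xi|^{p-2}\xi$ gives $w_n\to 0$ a.e. The elementary bound $|w_n|^{p/(p-1)}\le C_p(|\nabla\phi_n|^p+|\nabla\phi|^p)$ combined with $L^1(\Omega_\gamma,|x|^\alpha dx)$-convergence of $|\nabla\phi_n|^p$ to $|\nabla\phi|^p$ supplies uniform integrability of the dominating family (with respect to the weighted measure), and Vitali's convergence theorem closes the argument. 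Since every subsequence of $\{\phi_n\}$ admits a further subsequence along which $A\phi_n\to A\phi$ in $X^*$, the whole sequence converges, completing the proof.
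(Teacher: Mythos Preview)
Your proof is correct. For $B$, boundedness, coercivity, and monotonicity your arguments coincide with the paper's essentially line for line. The genuine difference is in the continuity of $A$: the paper only establishes \emph{demicontinuity}, i.e., $\langle A\phi_n,\psi\rangle\to\langle A\phi,\psi\rangle$ for every fixed $\psi\in X$, by noting that $|x|^{\alpha/p'}|\nabla\phi_n|^{p-2}\nabla\phi_n$ is bounded in $L_{p'}(\Omega_\gamma)$ and converges a.e.\ along a subsequence, then identifying the weak $L_{p'}$ limit. You instead prove the stronger \emph{norm} continuity $\|A\phi_n-A\phi\|_{X^*}\to 0$ via Vitali's theorem, using that $|w_n|^{p/(p-1)}\le C_p(|\nabla\phi_n|^p+|\nabla\phi|^p)$ is dominated by a sequence convergent (hence uniformly integrable) in $L_1(\Omega_\gamma,|x|^\alpha dx)$, together with the subsequence--subsequence principle. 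Your route is a bit more work but delivers a stronger conclusion; the paper's weak-limit identification is lighter and already sufficient for the Browder--Minty application in Theorem~\ref{MBthm}.
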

\begin{proof}
\noindent
$(i)$ \textbf{Continuity:} We only prove the continuity of $A$, since the continuity of $B$ would follow similarly. To this end, suppose $\phi_n\in X$ such that $\phi_n\to v$ in the norm of $X$. Thus, up to a subsequence $\nabla \phi_{n}\to \nabla v$ in $\Omega_{\gamma}$. We observe that
\begin{equation}\label{mfd}
\||x|^\frac{\alpha}{p'}|\nabla v_{n}|^{p-1}\|_{L^{p'}(\Omega_{\gamma})}\leq c,
\end{equation}
for some constant $c>0$, which is independent of $n$. Thus, up to a subsequence, we have
\begin{equation}\label{fc}
|x|^\frac{\alpha}{p'}|\nabla \phi_n|^{p-2}\nabla \phi_n{\rightharpoonup} |x|^\frac{\alpha}{p'}|\nabla \phi|^{p-1}\nabla \phi\text{ weakly in }L^{p'}(\Omega_{\gamma}).
\end{equation}
Since, the weak limit is independent of the choice of the subsequence, as a consequence of \eqref{fc}, we have
$$
\lim_{n\to\infty}\langle A\phi_n,\psi\rangle=\langle A\phi,\psi\rangle
$$
for every $\psi\in X$. Thus $A$ is continuous.
\vskip 0.2cm
\noindent
$(ii)$ \textbf{Boundedness:} Using the estimate \eqref{mest} obtained below, we have
$$
\|A\phi\|_{X^*}=\sup_{\|\psi\|_X\leq 1}|\langle A\phi,\psi\rangle|\leq\|\phi\|_X^{p-1}\|\psi\|_X\leq\|\phi\|^{p-1}_X.
$$
Thus, $A$ is bounded.

\noindent
\textbf{Coercivity:}  We observe that
$$
\langle A\phi,\phi\rangle=\|\phi\|_{X}^p.
$$
Since $p>1$, we have $A$ is coercive.

\noindent
\textbf{Monotonicity:} Using Lemma \ref{alg}, it follows that there exists a constant $C=C(p)>0$ such that
for every $\phi,\psi\in X$, we have
\begin{multline*}
\langle A\phi-A\psi,\phi-\psi\rangle=\int_{\Omega_{\gamma}}(|\nabla\,\phi|^{p-2}\nabla\,\phi-|\nabla\,\psi|^{p-2}\nabla\,\psi,\nabla\,(\phi-\psi))\,|x|^\alpha\,dx\\
=
\int_{\Omega_{\gamma}}(|\nabla\,\phi|^{p-2}\nabla\,\psi-|\nabla\,\psi|^{p-2}\nabla\,\psi,\nabla\,\phi-\nabla\,\psi)\,|x|^\alpha\,dx
\\
 \geq C(p)\int_{\Omega_{\gamma}}(|\nabla\,\phi|+|\nabla\,\psi|)^{p-2}|\nabla\,\phi-\nabla\,\psi|^2\,|x|^\alpha\,dx
\geq 0.
\end{multline*}
Thus, $A$ is a monotone operator.
\end{proof}

\begin{lemma}\label{auxlmab}
The operators $A$ defined by \eqref{ma} and $B$ defined by \eqref{mb} satisfy the following properties:
\vskip 0.2cm
\noindent
$(H_1)$ $A(t\phi)=|t|^{p-2}tA(\phi)\quad\forall t\in\R\quad \text{and}\quad\forall \phi\in X$.
\vskip 0.2cm

\noindent
$(H_2)$ $B(t\psi)=tB(\psi)\quad\forall t\in\R\quad \text{and}\quad\forall \psi\in Y$.
\vskip 0.2cm

\noindent
$(H_3)$ $\langle A(\phi),\psi\rangle\leq\|\phi\|_X^{p-1}\|\psi\|_X$ for all $\phi,\psi\in X$, where the equality holds if and only if $\phi=0$ or $\psi=0$ or $\phi=t \psi$ for some $t>0$.
\vskip 0.2cm

\noindent
$(H_4)$ $\langle B(\phi),\psi\rangle\leq\|\phi\|_Y^{p-1}\|\psi\|_Y$ for all $\phi,\psi\in Y$, where the equality holds if and only if $\phi=0$ or $\psi=0$ or $\phi=t\psi$ for some $t\geq 0$.
\vskip 0.2cm

\noindent
$(H_5)$ For every $\psi\in Y\setminus\{0\}$ there exists $u\in X\setminus\{0\}$ such that
$$
\langle Au,\phi\rangle=\langle B(\psi),\phi\rangle\quad\forall\quad \phi\in X.
$$
\end{lemma}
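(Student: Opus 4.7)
The plan is to verify $(H_1)$--$(H_5)$ in order, leaning on Lemma~\ref{newlem} for the structural properties of $A$ and on the compact embedding from Theorem~\ref{thmemb} to interpret $B\psi$ as an element of $X^*$. Items $(H_1)$ and $(H_2)$ are essentially algebra: substituting into \eqref{ma} I use the identity $|\nabla(t\phi)|^{p-2}\nabla(t\phi)=|t|^{p-2}t\,|\nabla\phi|^{p-2}\nabla\phi$, while $(H_2)$ is the linearity of the integral defining $B$.

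For $(H_3)$ I would estimate pointwise $|\nabla\phi|^{p-2}(\nabla\phi,\nabla\psi)\le|\nabla\phi|^{p-1}|\nabla\psi|$, rewrite the integrand as $\bigl(|\nabla\phi|^{p-1}|x|^{\alpha/p'}\bigr)\bigl(|\nabla\psi|\,|x|^{\alpha/p}\bigr)$, and apply H\"older with conjugate exponents $p'$ and $p$; this yields $\langle A\phi,\psi\rangle\le\|\phi\|_X^{p-1}\|\psi\|_X$. Equality in H\"older forces $|\nabla\phi|^p$ to be a scalar multiple of $|\nabla\psi|^p$ almost everywhere, while equality in the pointwise bound forces $\nabla\phi$ and $\nabla\psi$ to be parallel with the same orientation; together these yield $\nabla\phi=t\nabla\psi$ a.e.\ for some $t\ge 0$, so $\phi-t\psi$ is a constant, which vanishes because both functions lie in the zero-mean space $X$; the orientation rules out $t<0$, which is why $(H_3)$ is stated with $t>0$. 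For $(H_4)$, since $q=2$ the form $\langle B\phi,\psi\rangle$ is the $L_2$ inner product, so Cauchy--Schwarz gives the required bound and its equality case gives $\phi=t\psi$ for some $t\ge 0$, matching the stated condition.

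The substantive step is $(H_5)$, which I would prove by invoking the Browder--Minty surjectivity result recalled in Theorem~\ref{MBthm}. The space $X$ is a closed subspace of $W^1_p(\Omega_\gamma,|x|^\alpha)$, which is reflexive and separable since $|x|^\alpha\in A_p$ for the admissible range of $\alpha$; hence $X$ inherits these properties. By Lemma~\ref{newlem}, $A:X\to X^*$ is bounded, continuous, coercive, and monotone, so Theorem~\ref{MBthm} applies once we exhibit $B\psi$ as an element of $X^*$. For this, I use the compact embedding $X\hookrightarrow L_2(\Omega_\gamma)$ of Theorem~\ref{thmemb} (valid because $2\in(1,p^*)$ in the parameter range of this section), giving a constant $C>0$ with
\begin{equation*}
|\langle B\psi,\phi\rangle|=\left|\int_{\Omega_\gamma}\psi\phi\,dx\right|\le\|\psi\|_Y\|\phi\|_{L_2(\Omega_\gamma)}\le C\|\psi\|_Y\|\phi\|_X
\end{equation*}
for every $\phi\in X$. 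Thus $B\psi\in X^*$, and Theorem~\ref{MBthm} supplies $u\in X$ with $Au=B\psi$ in $X^*$. Nontriviality $u\neq 0$ follows by contradiction: if $u=0$, then $\langle B\psi,\phi\rangle=0$ for every $\phi\in X$, which (since $X$ is the zero-mean subspace of $L_2$) would force $\psi$ to be constant; the constant must vanish by $\psi\in Y\setminus\{0\}$ being understood in the zero-mean working convention consistent with $X\cap Y$ in Theorem~\ref{newthm}, contradicting $\psi\neq 0$.

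The main obstacle I expect is the careful handling of the ambient space in $(H_5)$: one has to simultaneously keep track of the reflexivity/separability of $X$, the $X^*$-regularity of $B\psi$ (which crucially uses the compact embedding from Theorem~\ref{thmemb} rather than any direct $L_2$ argument on $W^1_p(\Omega_\gamma,|x|^\alpha)$), and the nontriviality of $u$, which implicitly forces $\psi$ to be taken in the zero-mean subspace of $Y$ on which $B$ is injective into $X^*$. The rest, including the equality case analysis in $(H_3)$, is routine manipulation of H\"older-type inequalities.
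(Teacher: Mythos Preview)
Your proposal is correct and follows essentially the same route as the paper: $(H_1)$--$(H_2)$ from the definitions, $(H_3)$ via the pointwise Cauchy--Schwarz bound followed by H\"older with exponents $p'$ and $p$ (with the equality analysis forcing first pointwise parallelism of the gradients and then a constant proportionality factor, whence $\phi=t\psi$ by the zero-mean constraint on $X$), $(H_4)$ by Cauchy--Schwarz in $L_2$, and $(H_5)$ via Theorem~\ref{MBthm} after observing that $B\psi\in X^*$ thanks to the embedding $X\hookrightarrow Y$. The one place where you go beyond the paper is the nontriviality of $u$ in $(H_5)$: the paper simply asserts $u\in X\setminus\{0\}$ without further comment, whereas you correctly note that for a nonzero constant $\psi$ one would get $B\psi=0$ in $X^*$ and hence $u=0$, and resolve this by restricting to zero-mean $\psi$---which is exactly the setting in which $(H_5)$ is applied in Theorem~\ref{newthm}.
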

\begin{proof}
\vskip 0.2cm

\noindent
$(H_1)$ Follows by the definition of $A$ in \eqref{ma}.

\noindent
$(H_2)$  Follows by the definition of $B$ in \eqref{mb}.
\vskip 0.2cm

\noindent
$(H_3)$ First using Cauchy-Schwartz inequality and then by H\"older's inequality with exponents $p{'}=\frac{p}{p-1}$ and $p$, for every $v,w\in X$, we obtain
\begin{equation}\label{mest}
\begin{split}
\langle A\phi,\psi\rangle&=\int_{\Omega_{\gamma}}|\nabla \phi|^{p-2}\nabla \phi\nabla \psi\,|x|^\alpha\,dx\leq\int_{\Omega_{\gamma}}|\nabla \phi|^{p-1}|\nabla \psi|\,|x|^\alpha\,dx\\
&\leq\Big(\int_{\Omega_{\gamma}}|\nabla \phi|^p\,|x|^\alpha\,dx\Big)^\frac{p-1}{p}\Big(\int_{\Omega_{\gamma}}|\nabla \psi|^p\,|x|^\alpha\,dx\Big)^\frac{1}{p}=\|\phi\|_X^{p-1}\|\psi\|_X.
\end{split}
\end{equation}
Let the equality
\begin{equation}\label{mequal}
\langle A(\phi),\psi\rangle=\|\phi\|_X^{p-1}\|\psi\|_X
\end{equation}
holds for every $\phi,\psi\in X$. We claim that either $\phi=0$ or $\psi=0$ or $\phi=t\psi$ for some constant $t>0$. Indeed, if $\phi=0$ or $\psi=0$, this is trivial. Therefore, we assume $\phi\neq 0$ and $\psi\neq 0$ and prove that $\phi=t\psi$ for some constant $t>0$. By the estimate \eqref{mest} if the equality \eqref{mequal} holds, then we have
\begin{equation}\label{mequal1}
\langle A(\phi),\psi\rangle=\int_{\Omega_{\gamma}}|\nabla\phi|^{p-1}|\nabla\psi|\,|x|^\alpha\,dx,
\end{equation}
which gives  us
\begin{equation}\label{mCS}
\int_{\Omega_{\gamma}}f(x)\,dx=0,
\end{equation}
where
$$
f(x)=|\nabla \phi|^{p-1}|\nabla \psi|-|\nabla \phi|^{p-2}\nabla \phi\nabla \psi.
$$
By Cauchy-Schwartz inequality, we have $f\geq 0$ in $\Omega_{\gamma}$. Hence using this fact in \eqref{mCS}, we have $f=0$ in $\Omega_{\gamma}$, which reduces to
\begin{equation}\label{mfCS}
|\nabla\phi|^{p-1}\nabla\psi=|\nabla \phi|^{p-2}\nabla \phi\nabla \psi\text{ in }\Omega_{\gamma},
\end{equation}
which gives $\nabla{\phi}(x)=c(x)\nabla \psi(x)$ for some $c(x)\geq 0$.

On the otherhand, if the equality \eqref{mequal} holds, then by the estimate \eqref{mest} we have
\begin{equation}\label{mequal2}
\begin{split}
f_1=f_2,
\end{split}
\end{equation}
where
$$
f_1=\int_{\Omega_{\gamma}}|\nabla{\phi}|^{p-1}\nabla \psi\,|x|^\alpha\,dx,\quad
f_2=\left(\int_{\Omega_{\gamma}}|\nabla{\phi}|^p\,|x|^\alpha\,dx\right)^\frac{p-1}{p}\left(\int_{\Omega_{\gamma}}|\nabla\psi|^p\,|x|^\alpha\,dx\right)^\frac{1}{p}.
$$
Thus, we have $|\nabla{\phi}|(x)=d|\nabla\psi|(x)$ a.e. $x\in \Omega_{\gamma}$ for some constant $d>0$. Thus, we obtain $c(x)=d$. As a consequence, we get $\nabla{\phi}=d\nabla \psi$ a.e. in $\Omega_{\gamma}$ and therefore, we deduce that $\|\phi-d\psi\|_X=0$, which gives $\phi=d\psi$ a.e. in $\Omega_{\gamma}$. Hence, the property $(H3)$ is verified.
\vskip 0.2cm

\noindent
$(H_4)$ The hypothesis $(H_4)$ can be verified similarly.

\vskip 0.2cm

\noindent
$(H_5)$ We observe that $X$ is a separable and reflexive Banach space. By Lemma \ref{newlem}, the operator $A:X\to X^*$ is bounded, continuous, coercive and monotone.

By the Sobolev embedding theorem, we have $X$ is continuously embedded in $Y$. Therefore, $B(\psi)\in X^*$ for every $\psi\in Y\setminus\{0\}$.

Hence, by Theorem \ref{MBthm}, for every $\psi\in Y\setminus\{0\}$, there exists $u\in X\setminus\{0\}$ such that
$$
\langle A u,\phi\rangle=\langle B\psi,\phi\rangle\quad\forall \phi\in X.
$$
Hence the property $(H_5)$ holds. This completes the proof.
\end{proof}

\subsection{Proof of the regularity results:}
\vskip 0.2cm
\noindent
\textbf{Proof of Theorem \ref{newthm}:}
\vskip 0.2cm
\noindent
$(a)$ First we recall the definition of the operators $A:X\to X^*$ from \eqref{ma} and $B:Y\to Y^*$ from \eqref{mb} respectively. Then, noting the property $(H_5)$ from Lemma \ref{auxlmab} and proceeding along the lines of the proof in \cite[page $579$ and pages $584-585$]{Ercole}, the result follows.
\vskip 0.2cm
\noindent
$(b)$ We observe that $X$ is uniformly convex Banach space and by the Sobolev embedding theorem \cite{GG94,GU09}, $X$ is compactly embedded in $Y$. Next, using Lemma \ref{newlem}-$(i)$, the operators $A:X\to X^*$ and $B:Y\to Y^*$ are continuous and by Lemma \ref{auxlmab}, the properties $(H_1)-(H_5)$ holds. Noting these facts, the result follows from \cite[page $579$, Theorem 1]{Ercole}. \qed
\vskip 0.2cm
\noindent
\textbf{Proof of Theorem \ref{subopthm1}:} The proof follows due to the same reasoning as in the proof of Theorem \ref{newthm}-$(b)$ except that here we apply \cite[page $583$, Proposition $2$]{Ercole} in place of \cite[page $579$, Theorem 1]{Ercole}.
\vskip 0.2cm
\noindent

\textbf{Acknowledgements.}
The second author was supported by MSHER (agreement No. 075-02-2023-943).

\vskip 0.1cm

Department of Mathematical Sciences, Indian Institute of Science Education and Research Berhampur,
Berhampur, Odisha 760010, India

\emph{E-mail address:} \email{pgarain92@gmail.com} \\

Regional Scientific and Educational Mathematical Center, Tomsk State University, 634050 Tomsk, Lenin Ave. 36, Russia
							
\emph{E-mail address:} \email{va-pchelintsev@yandex.ru} \\
			
Department of Mathematics, Ben-Gurion University of the Negev, P.O.Box 653, Beer Sheva, 8410501, Israel
							
\emph{E-mail address:} \email{ukhlov@math.bgu.ac.il}


\begin{thebibliography}{99}

\bibitem{BCDL16} B.~Brandolini, F.~Chiacchio, E.~B.~Dryden, J.~J.~Langford, Sharp Poincar\'e inequalities in a class of non-convex sets, J. Spectr. Theory, 8 (2018), 1583--1615.

\bibitem{BCT9} B.~Brandolini, F.~Chiacchio, C.~Trombetti, Sharp estimates for eigenfunctions of a Neumann problem, Comm. Partial Differential Equations, 34 (2009), 1317--1337.

\bibitem{BCT15} B.~Brandolini, F.~Chiacchio, C.~Trombetti, Optimal lower bounds for eigenvalues of linear and nonlinear Neumann problems, Proc. of the Royal Soc. of Edinburgh, 145A (2015), 31--45.

\bibitem{CHP} C.~Gisella; H.Antoine; P.~Giovanni, Corrigendum to "An isoperimetric inequality for a nonlinear eigenvalue problem", [Ann. I. H. Poincar\'e -- AN 29 (1) (2012) 21--34].

\bibitem{B10} H.~Brezis, Functional Analysis, Sobolev Spaces and Partial Differential Equations, Springer New York, NY, 1969.

\bibitem{var} P.~G.~Ciarlet, Linear and nonlinear functional analysis with applications, Society for Industrial and
Applied Mathematics, Philadelphia, PA, 2013.

\bibitem{Dama}
L.~Damascelli, Comparison theorems for some quasilinear degenerate elliptic operators and applications
to symmetry and monotonicity results, Ann. Inst. H. Poincar\'e Anal. Non Lin\'eaire, 15(4) (1998), 493--516.

\bibitem{Ercole}
G.~Ercole, Solving an abstract nonlinear eigenvalue problem by the inverse iteration method, Bull. Braz.
Math. Soc., (N.S.), 49(3) (2018), 577--591.

\bibitem{Fe69} H.~Federer, Geometric measure theory, Sp\-rin\-ger Verlag, Berlin, 1969.

\bibitem{FPR}
R.~Filippucci, P.~Pucci, M.~Rigoli, Nonlinear weighted $p$-Laplacian elliptic inequalities with gradient
terms, Commun. Contemp. Math., 12 (2010), 501--535.

\bibitem{GG94}
V.~Gol'dshtein, L.~Gurov, Applications of change of variables operators for exact embedding theorems,
Integral Equ. Oper. Theory, {19} (1994), 1--24.

\bibitem{GU09}
V.~Gol'dshtein, A.~Ukhlov, Weighted Sobolev spaces and embedding theorems,
Trans. Amer. Math. Soc., {361} (2009), 3829--3850.

\bibitem{GU16} V.~Gol'dshtein, A.~Ukhlov, On the first Eigenvalues of Free Vibrating Membranes in Conformal Regular Domains,
Arch. Rational Mech. Anal., 221 (2016), 893--915.

\bibitem{GU17} V.~Gol'dshtein, A.~Ukhlov, The spectral estimates for the Neumann-Laplace operator in space domains, Adv. Math., 315 (2017), 166--193.

\bibitem{HKM} J.~Heinonen, T.~Kilpelinen, O.~Martio, Nonlinear Potential Theory of Degenerate Elliptic Equations. Clarendon Press. Oxford,
New York, Tokio. 1993.

\bibitem{Kufn} A.~Kufner, Weighted Sobolev spaces, Leipzig, Teubner-Texte zur Mathematik, 1980.

\bibitem{M} V.~Maz'ya, Sobolev spaces: with applications to elliptic partial differential equations, Springer, Berlin/Heidelberg, 2010.

\bibitem{PW} L.~E.~Payne, H.~F.~Weinberger, An optimal Poincar\'e inequality for convex domains, Arch. Rat. Mech. Anal., 5 (1960), 286-292.

\bibitem{PU23}
V.~Pchelintsev, A.~Ukhlov, On regularity of weighted Sobolev homeomorphisms. J. Pure and Applied Functional Analysis (in press).

\bibitem{PS51} G.~P\'olya, G.~Szeg\"o, Isoperimetric Inequalities in Mathematical Physics, Princeton University Press, 1951.

\bibitem{U93}
A.~Ukhlov, On mappings, which induce embeddings of Sobolev spaces, Siberian Math. J. {34} (1993), 185--192.

\bibitem{UV08}
A.~Ukhlov, S.~K.~Vodop'yanov, Mappings associated with weighted Sobolev spaces, Complex Analysis and Dynamical Systems III, Contemporary Mathematics Series, 455 (2008), 369--382.

\bibitem{VGR79} S.~K.~Vodop'yanov, V.~M.~Gol'dshtein, Yu.~G.~Reshetnyak, On geometric properties of functions with generalized first derivatives, Uspekhi Mat. Nauk, 34 (1979), 17--65. 

\bibitem{VU04} S.~K.~Vodop'yanov, A.~D.~Ukhlov, Set functions and their applications in the theory of Lebesgue and Sobolev spaces,
Siberian Adv. in Math, 14 (2004), 78--125.

\bibitem{VU05} S.~K.~Vodop'yanov, A.~D.~Ukhlov, Set functions and their applications in the theory of Lebesgue and Sobolev spaces,
Siberian Adv. in Math, 15 (2005), 91--125.


\end{thebibliography}
\end{document}